\documentclass[12pt,a4paper]{article}
\usepackage{amsmath,amssymb,amsfonts}
\usepackage{amsthm}
\usepackage{mathtools}
\usepackage{marvosym}
\usepackage{authblk}
\usepackage{enumerate}
\usepackage[english]{babel}
\usepackage{enumitem}
\usepackage{todonotes}
\usepackage{indentfirst}
\usepackage{geometry}
\usepackage{graphicx}
\usepackage{tabularx}
\usepackage{tocbibind}
\usepackage{tikz}
\usepackage{array}
\usepackage{caption}
\usepackage{setspace}
\usepackage{hyperref}
\usepackage{lipsum}
\usepackage{listings}
\usepackage{multicol,multirow}
\usepackage{xcolor,xspace}
\usepackage[english]{babel}
\usepackage[export]{adjustbox}

\newtheorem{theorem}{Theorem}[section]
\newtheorem{corollary}{Corollary}[theorem]
\newtheorem{lemma}[theorem]{Lemma}
\newtheorem{prop}[theorem]{Proposition}
\newtheorem{result}[theorem]{Result}
\newtheorem{example}[theorem]{Example}
\theoremstyle{definition}
\newtheorem{definition}{Definition}[section]
\theoremstyle{remark}

\geometry{a4paper,total={170mm,246mm},left=20mm,top=23mm}

\title{\textbf{On the strong geodeticity in the corona type product of graphs}}
\author[1]{Bishal Sonar\thanks{Email: bsonarnits@gmail.com}}
\author[2]{Satyam Guragain\thanks{Email: shatym17@gmail.com}}
\author[3]{Ravi Srivastava\thanks{Corresponding author Email: ravi@nitsikkim.ac.in}}
\affil[1,2,3]{Department of Mathematics, National Institute of Technology Sikkim, South Sikkim 737139, India}
\date{}
\setlength {\marginparwidth }{2cm}

\begin{document}
\parskip1ex
\parindent0pt
\maketitle

\begin{abstract}
    \noindent The paper focuses on studying strong geodetic sets and numbers in the context of corona-type products of graphs. Our primary focus is on three variations of the corona products: the generalized corona, generalized edge corona, and generalized neighborhood corona products. A strong geodetic set is a minimal subset of vertices that covers all vertices in the graph through unique geodesics connecting pairs from this subset. We obtain the strong geodetic set and number of the corona-type product graph using the strong 2-geodetic set and strong 2-geodetic number of the initial arbitrary graphs. We analyze how the structural properties of these corona products affect the strong geodetic number, providing new insights into geodetic coverage and the relationships between graph compositions. This work contributes to expanding research on the geodetic parameters of product graphs.
\end{abstract}

\textbf{MSC2020 Classification:} 05C12, 05C38\\
\textbf{Keywords:} Generalised corona, Strong geodetic set, Strong geodetic number, 2-geodesic, Strong 2-geodesic cover.

\section{Introduction}
    Graph theory, known for its complex combinatorial aspects, has become an essential branch of mathematical study due to its broad applicability in computer science, network theory, and biology. A key focus in this area is the exploration of geodetic sets and geodesic paths, which involve identifying the smallest collection of vertices that encompass all the shortest paths between any two vertices in a graph. This minimal set is known as the geodetic set, and its size is referred to as the geodetic number of the graph. Harary and Buckley~\cite{buckley1990f} made significant contributions to the foundational principles of understanding geodesic paths and their relationship with the structure of graphs. Their work serves as the basis for numerous research problems \cite{manuel2022geodesic, foucaud2022monitoring, manuel2023geodesic,zhang2008exact, harary1993geodetic,pelayo2013geodesic,atici2002computational}. Building on these initial works, Manuel et al.~\cite{manuel2018strong} introduced the concept of the strong geodetic set, which imposes a stricter criterion by requiring each pair of vertices in the set to be connected by a unique geodesic that includes all other vertices in the graph. The size of this set is called the strong geodetic number. This concept has undergone extensive scrutiny, producing significant findings across various graph classes and their composite structures.

    Expanding on this research, we examine the analysis of strong geodetic sets and numbers as they relate to different variations of the corona product of graphs. The corona product is a graph operation that combines two distinct graphs by connecting the vertices of one graph to those of another, resulting in a new graph with properties inherited from both original graphs. Our focus is on three variations of the corona product: the general corona product, the generalized edge corona product, and the generalized neighborhood corona product.

    In the general corona product~\cite{frucht1970corona}, every vertex of a principal graph is linked to all vertices of a corresponding subgraph. In the generalized edge corona product, the endpoints of edges in the principal graph are connected to the vertices of the subgraphs. Meanwhile, the generalized neighborhood corona product establishes connections between the vertices of subgraphs and the neighborhood of a respective vertex in the base graph. Each variant presents a distinct structure that significantly impacts the geodetic attributes of the resulting graph, including the strong geodetic number.

    This paper aims to examine the influence of these different products on the strong geodetic number, revealing new understanding and generalizations within graph theory. By determining the strong geodetic numbers relevant to each of these product types, we hope to expand the current knowledge base concerning strong geodetic sets and strong geodetic numbers, offering a comprehensive exploration of how the structure of products affects geodesic coverage. All the graphs considered in this paper are simple and connected. 

    \subsection{Preliminaries}
        Let $G=(V, E)$ be a connected graph, where $V$ denotes the set of vertices, and $E$ denotes the edges with $|V|\geq2$. The cardinality of $V$ is called the order of the graph $G$, and the cardinality of $E$ is called the size of $G$. We denote the degree of a vertex $u$ in $G$ by $d(u)$. A vertex with degree one is called a pendent vertex. $N(u)$ is a subset of $V(G)$ containing all the vertices adjacent to $u$. $E(u)$ is a subset of $E(G)$, denotes the set of edges having one of its vertex $u$. A path is a walk with non-repeating vertices; the shortest path connecting two vertices $u$ and $v$ in $G$ is called the geodesic or an isometric path. We denote it by $geodesic(u,v)$, and its distance is denoted by $d(u,v)$. The length of any longest geodesic from a vertex $u\in V$ is the eccentricity $e(u)$ of $u$, and the maximum of the eccentricities of all of the vertices in $G$ is the diameter of $G$ denoted as $diam(G)$ \cite{harary1990distance}. A graph is geodetic if a unique isometric path connects every pair of vertices in $G$ \cite{harary1990distance}. If a vertex $u$ in a graph $G$ lies on a $geodesic(v,w)$ geodesic in $G$ for any two vertices $v,w\in V$, then the pair $(v,w)$ is said to geodominate(cover) $u$. If every vertex of $G$ is covered by the geodesic joining any two vertices in $\eta$, then $\eta$ is considered a geodesic cover of $G$. The geodetic cover of the least cardinality is called the geodetic basis of $G$, and the cardinality of the basis set is called the geodetic number denoted by $g(G)$ \cite{harary1990distance}. A set $\eta_{S_g}\subseteq V$ is called a strong geodetic set of $G$ if all the vertices in $V\smallsetminus \eta_{S_g}$ are covered using geodesic that is fixed between the elements of $\eta_{S_g}$, in a manner that every pair of vertices in $\eta_{S_g}$ is assigned a unique geodesic. If we denote $\tilde{I}[(s,t)]$ as the geodesic that is fixed between two vertices $s$ and $t$ of $\eta_{Sg}$ and $\tilde{I}[\eta_{Sg}]=\{\tilde{I}[(s,t)]:s,t\in \eta_{Sg}\}$, then $\eta_{Sg}$ is called a strong geodetic set if $V(\tilde{I}[\eta_{Sg}])=V(G)$. The minimum order of such a set is called the strong geodetic number denoted by $Sg(G)$, and any such set of least order is called the strong geodetic basis \cite{manuel2018strong}. Two vertices are antipodal if they are farthest from each other.

        \begin{definition}\cite{laali2016spectra}[Generalized corona product]
            Given simple graphs $G, H_1,\ldots, H_n$, where $n=|V(G)|$, the generalized corona, denoted by $G~\Tilde{\circ}\overset{n}{\underset{i=1}{ \Lambda}} H_i$, is the graph obtained by taking one copy of graphs $G,H_1,\ldots,H_n$ and joining the $i^\text{th}$ vertex of $G$ to every vertex of $H_i$. 
        \end{definition}

        \begin{definition}\cite{abdolhosseinzadeh2017generalized}[Generalized edge corona product]
            Given simple graphs $G, H_1,\ldots, H_n$, where $n=|V(G)|$, the generalized edge corona, denoted by $G~\Tilde{\diamond}\overset{n}{\underset{i=1}{ \Lambda}} H_i$, is the graph obtained by taking one copy of graphs $G,H_1,\ldots,H_n$ and joining the two end vertices of $i^\text{th}$ edge of $G$ to every vertex of $H_i$.
        \end{definition}

        \begin{definition}\cite{setiawan2021bounds}[Generalized neighborhood corona product]
            Given simple graphs $G, H_1,\ldots, H_n$, where $n=|V(G)|$, the generalized neighborhood corona, denoted by $G~\Tilde{\star}\overset{n}{\underset{i=1}{ \Lambda}} H_i$, is the graph obtained by taking one copy of graphs $G,H_1,\ldots,H_n$ and joining each vertex in $H_i$ to the neighborhood of the $i^\text{th}$ vertex of $G$.
        \end{definition}

\section{Strong geodetic number}

    \begin{definition}[2-geodesic]\label{Def 1}
        A $geodesic(x,y)$ is called 2-geodesic if $1\leq d(x,y)\leq2$.
    \end{definition}

    \begin{definition}[2-geodetic cover]
        If every vertex of $G$ is covered by the 2-geodesic joining any two vertices in $\eta'\subseteq V(G)$, then $\eta'$ is called the 2-geodetic cover of $G$.
    \end{definition}

    \begin{definition}[Strong 2-geodetic set]
        A set $\eta'_{S_g}\subseteq V(G)$ is called a strong geodetic set of $G$ if all the vertices in $V(G)\smallsetminus \eta'_{S_g}$ are covered using 2-geodesic that are fixed between the elements of $\eta'_{S_g}$ in a manner that every pair of vertices in $\eta'_{S_g}$ is assigned a unique 2-geodesic.
    \end{definition}

    \begin{definition}[Strong 2-geodetic basis]
        The strong 2-geodetic cover of the least cardinality of a graph $G$ is called the strong 2-geodetic basis, denoted by $\eta'_{Sg}(G)$.
    \end{definition}

    \begin{definition}[Strong 2-geodetic number]
        The cardinality of the strong 2-geodetic basis of a graph $G$ is called the strong 2-geodetic number, denoted by $Sg(G)$.
    \end{definition}

%
%
        \begin{lemma}\label{Lemma 0}
            For any Graph $G$, the pendent vertex of $G$ cannot be covered by any two vertices in $G$, i.e., pendent vertices always belong to the strong geodetic set of $G$.
        \end{lemma}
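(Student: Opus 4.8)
The plan is to prove the contrapositive flavour of the statement directly: take a pendent vertex $p$ of $G$, show it cannot lie on any geodesic $\tilde{I}[(s,t)]$ with $s,t \neq p$, and conclude that any geodetic (hence strong geodetic) set must contain $p$. First I would recall that since $p$ is pendent, it has a unique neighbour, say $w$, with $d(p)=1$. The key structural fact to establish is that $p$ is an \emph{endpoint} of every geodesic passing through it: if a geodesic $\gamma$ from $s$ to $t$ contains $p$ as an internal vertex, then $\gamma$ uses two edges incident to $p$, which is impossible because $p$ has only one incident edge. Hence $p$ can only occur as one of the two endpoints $s$ or $t$ of a fixed geodesic.

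The second step converts this into a statement about covering. By the definition of a (strong) geodetic set, every vertex of $V(G)\smallsetminus \eta_{S_g}$ must be covered by a fixed geodesic between two elements of $\eta_{S_g}$. If $p \notin \eta_{S_g}$, then $p$ would have to be covered, i.e., lie on some $\tilde{I}[(s,t)]$ with $s,t\in\eta_{S_g}$ and $s,t\neq p$; but by the first step $p$ can only lie on such a geodesic as an endpoint, contradicting $s,t\neq p$. Therefore $p\in\eta_{S_g}$. Since this holds for the strong geodetic basis in particular, every pendent vertex belongs to it, and likewise to every strong geodetic set.

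I would also note the one-line justification for why $p$ cannot be \emph{both} an internal vertex and why the endpoint case is excluded when $s,t\in\eta_{S_g}\smallsetminus\{p\}$: a geodesic is by definition a path (non-repeating vertices), so the two edges at an internal vertex are distinct, forcing $\deg(p)\geq 2$. This is the crux, and it is genuinely elementary; there is no real obstacle here. The only thing worth stating carefully is the quantifier in the lemma's phrasing — "cannot be covered by any two vertices" should be read as "cannot be covered by a geodesic between two vertices other than $p$ itself" — so I would make that reading explicit at the start of the argument to avoid the vacuous misreading in which $p$ is trivially "covered" by the degenerate pair $(p,p)$ or by a pair including $p$.
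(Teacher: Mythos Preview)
Your proposal is correct and follows essentially the same approach as the paper: both argue that an internal vertex of a geodesic must have degree at least $2$, so a pendent vertex (degree $1$) can never be an internal vertex of any geodesic and therefore must belong to every strong geodetic set. Your write-up is simply a more carefully unpacked version of the paper's two-line degree argument, including the explicit clarification of the phrase ``covered by any two vertices,'' but there is no substantive difference in strategy.
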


        \begin{proof}
            For a vertex to be covered by a pair of vertices, the degree of that vertex must be at least $2$, but the degree of a pendent vertex is $1$. So, a pendent vertex cannot be covered by any other pair of vertices. Hence, the pendent vertices always belong to the strong geodetic set of the graph.
        \end{proof}

%
%
    \begin{result}\label{Result 1}
        For a graph $G$, $Sg(G)=Sg'(G)$ if $0\leq diam(G)\leq2$.
    \end{result}
    \begin{proof}
        Let for any graph $G$ $diam(G)\leq2$, then for any $u,v\in V(G)$, $1\le d(u,v)\le2$. And by Definition \ref{Def 1}, each geodesic path with vertices in $\eta_{Sg}(G)$ are the 2-geodesic path. So $Sg(G)=Sg'(G)$.
    \end{proof}
    The converse of the above theorem is not true. It can be easily verified by the example given in Figure \ref{fig:1}. Here, $Sg(G)=Sg'(G)$, but $diam(G)=3$.
    \begin{figure}
        \centering
        \begin{tikzpicture}
        \node (A) at (-3, 0) [circle, fill=black, inner sep=2pt, label=above:a] {};
        \node (B) at (-1, 0) [circle, fill=black, inner sep=2pt, label=above:b] {};
        \node (C) at (1, 0) [circle, fill=black, inner sep=2pt, label=above:c] {};
        \node (D) at (3, 0) [circle, fill=black, inner sep=2pt, label=above:d] {};
        \node (E) at (-1, -2) [circle, fill=black, inner sep=2pt, label=below:e] {};
        \node (F) at (1, -2) [circle, fill=black, inner sep=2pt, label=below:f] {};

        \draw (A) -- (B);
        \draw (B) -- (C);
        \draw (C) -- (D);
        \draw (B) -- (E);
        \draw (C) -- (F);
        
\end{tikzpicture}
        \caption{$Sg(G)=Sg'(G)=\{a,d,e,f\}$ but $diam(G)=3$}
        \label{fig:1}
    \end{figure}
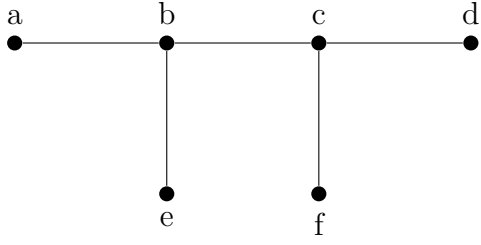
    
%
%
    \subsection{Generalized corona product}
        Let $G$ be a graph of order $n$ and $H_1,H_2,\ldots,H_n$ be $n$ graphs of order $t_1,t_2,\ldots,t_n$. We denote the set of vertices by $V(G)=\{u_1,u_2,\ldots,u_n\}$, and $V(H_i)=\{v^i_1,v^i_2,\ldots,v^i_{t_i}\}$. The generalized corona is given by $G~\Tilde{\circ}\overset{n}{\underset{i=1}{ \Lambda}} H_i$, and we have the following: 
%
%
    \begin{lemma}\label{Lemma 1}
        For $u_i,u_j\in V(G)$, then $u_i,u_j$ can be covered by any geodesic $(v_p^i,v_q^j)$ \big[$v_p^i\in V(H_i)$, $v_q^j\in V(H_j)$\big], for any $1\leq i<j\leq n$ and for any $1\leq p\leq t_i$, and $1\leq q\leq t_j$.
    \end{lemma}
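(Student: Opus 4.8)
The plan is to exhibit explicitly the geodesic connecting $v_p^i$ and $v_q^j$ and observe that it necessarily passes through both $u_i$ and $u_j$. First I would recall the structure of the generalized corona $G~\Tilde{\circ}\overset{n}{\underset{i=1}{\Lambda}} H_i$: the only edges incident to a vertex $v_p^i \in V(H_i)$ go either to other vertices of the same copy $H_i$ or to the single vertex $u_i \in V(G)$. In particular, $v_p^i$ has no neighbour outside $V(H_i) \cup \{u_i\}$, and similarly $v_q^j$ has no neighbour outside $V(H_j) \cup \{u_j\}$. Since $i \neq j$, the sets $V(H_i) \cup \{u_i\}$ and $V(H_j) \cup \{u_j\}$ are disjoint.

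Next I would argue that any path from $v_p^i$ to $v_q^j$ must leave the ``block'' $V(H_i)\cup\{u_i\}$, and the only vertex through which it can exit is $u_i$ (this is the unique cut vertex separating $H_i$ from the rest of the graph); symmetrically it must enter the block $V(H_j)\cup\{u_j\}$ through $u_j$. Hence every $v_p^i$--$v_q^j$ path, and in particular every geodesic, contains both $u_i$ and $u_j$. This shows the pair $(v_p^i, v_q^j)$ geodominates (covers) $u_i$ and $u_j$, which is exactly the claim.

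To make the cut-vertex argument precise without invoking extra machinery, I would just take an arbitrary $v_p^i$--$v_q^j$ geodesic $P$, let $w$ be the last vertex of $P$ lying in $V(H_i)\cup\{u_i\}$, and note that $w$'s successor on $P$ lies outside this set; since the only vertex of $V(H_i)\cup\{u_i\}$ with a neighbour outside is $u_i$, we get $w = u_i$, so $u_i \in V(P)$. The same reasoning applied from the $v_q^j$ end gives $u_j \in V(P)$. I do not expect any real obstacle here — the statement is essentially structural — but the one point to state carefully is the disjointness $\big(V(H_i)\cup\{u_i\}\big)\cap\big(V(H_j)\cup\{u_j\}\big)=\varnothing$ for $i\neq j$, which guarantees that ``leaving block $i$'' and ``entering block $j$'' are genuinely forced and that $u_i \neq u_j$, so that two distinct vertices are covered.
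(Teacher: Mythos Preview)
Your proof is correct and rests on the same structural observation as the paper's: in the generalized corona the only exit from the copy $H_i$ is through $u_i$, so any $v_p^i$--$v_q^j$ geodesic must contain both $u_i$ and $u_j$. The paper's argument is terser --- it simply notes that $v_p^i$ is adjacent to $u_i$ and $v_q^j$ to $u_j$ and asserts the conclusion --- whereas your explicit cut-vertex reasoning (taking the last vertex of the path in $V(H_i)\cup\{u_i\}$) spells out why adjacency forces containment; the underlying idea is identical, and your version is if anything more complete.
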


    \begin{proof}
        Each $v_p^i\in V(H_i),~ p=1,2,\ldots,t_i,$ is adjacent to $u_i$ in $V(G)$ and each $v_q^j\in V(H_j),~ q=1,2,\ldots,t_j,$ is adjacent to $u_j$ in $V(G)$. So the $geodesic(v_p^i,v_q^j)$ will cover $u_i$ and $u_j$. As $u_i,u_j, ~1\leq i<j\leq n$ and $v_p^i$ and $v_q^j$ are arbitrary, we have the lemma.
    \end{proof}
%
%
    \begin{lemma}\label{Lemma 2}
        If $v^i_p,v^i_q$ in $V(H_i)$ are non-adjacent then, $2=geodesic(v_p^i,v_q^i) (\text{in } G~\Tilde{\circ}\overset{n}{\underset{i=1}{ \Lambda}} H_i)\leq geodesic(v^i_p,v^i_q)(\text{in } H_i)$.
    \end{lemma}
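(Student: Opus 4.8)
The plan is to establish the equality and the inequality separately, both hinging on how the corona construction collapses distances inside each factor $H_i$.

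First I would prove the distance-$2$ claim in $G~\Tilde{\circ}\overset{n}{\underset{i=1}{ \Lambda}} H_i$. Since $v_p^i$ and $v_q^i$ are distinct and non-adjacent in $H_i$, they are also non-adjacent in the corona (the corona adds no edges within $H_i$), so $d_{G~\Tilde{\circ}\Lambda H_i}(v_p^i,v_q^i)\geq 2$. For the upper bound, recall from the definition of the generalized corona that the $i^{\text{th}}$ vertex $u_i$ of $G$ is joined to every vertex of $H_i$; hence $v_p^i - u_i - v_q^i$ is a walk of length $2$, giving $d_{G~\Tilde{\circ}\Lambda H_i}(v_p^i,v_q^i)\leq 2$. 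Combining, the distance is exactly $2$, and this path $v_p^i - u_i - v_q^i$ is a $geodesic(v_p^i,v_q^i)$ in the corona (indeed a $2$-geodesic in the sense of Definition \ref{Def 1}).

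Next I would compare with the distance inside $H_i$ itself. Since $v_p^i$ and $v_q^i$ are non-adjacent in $H_i$ and $H_i$ is connected (all graphs in the paper are simple and connected), any geodesic between them in $H_i$ has length at least $2$, i.e. $geodesic(v_p^i,v_q^i)(\text{in } H_i)\geq 2$. Together with the previous paragraph, $2 = geodesic(v_p^i,v_q^i)(\text{in } G~\Tilde{\circ}\overset{n}{\underset{i=1}{ \Lambda}} H_i) \leq geodesic(v_p^i,v_q^i)(\text{in } H_i)$, which is the asserted chain of (in)equalities.

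I do not anticipate a serious obstacle here; the only point requiring a little care is making explicit that the corona product introduces no new edges among the vertices of a single copy $H_i$ and adds no shortcut of length $1$ between $v_p^i$ and $v_q^i$ — so that the through-$u_i$ path of length $2$ is genuinely optimal — and that no shorter path leaves and re-enters $H_i$, which is immediate since leaving $H_i$ from $v_p^i$ forces passage through $u_i$ anyway. Once that is observed, the statement follows directly from the definition of the generalized corona and the connectedness of $H_i$.
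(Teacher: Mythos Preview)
Your proof is correct and follows essentially the same approach as the paper's: both use that $u_i$ is adjacent to every vertex of $H_i$ to exhibit the length-$2$ path $v_p^i\,u_i\,v_q^i$, and both use non-adjacency in $H_i$ to bound the $H_i$-distance below by $2$. If anything, your version is slightly more careful in explicitly arguing that the distance in the corona is at least $2$ (i.e., that no new edge is introduced between $v_p^i$ and $v_q^i$), a point the paper's proof leaves implicit.
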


    \begin{proof}
        Each vertex of $H_i$ is adjacent to the $i^\text{th}$ vertex of $G$. Suppose $v^i_p,v^i_q$ be two vertices in $H_i$ such that geodesic$(v^i_p,v^i_q)=m\geq 2$. Then in the generalized Corona product $v_p^i$ and $v_q^i$ are both adjacent to $u_i$, $i=1,2,\ldots,n$. So, the length of the minimum path(geodesic) connecting $v_p^i$ and $v_q^i$ is $2$.
        Hence, $2=geodesic(v_p^i,v_q^i) (\text{in } G~\Tilde{\circ}\overset{n}{\underset{i=1}{ \Lambda}} H_i)\leq geodesic(v^i_p,v^i_q)(\text{in } H_i)$.
    \end{proof}
%
%
    \begin{lemma}\label{Lemma 3}
        For a fixed strong 2-geodetic cover $\eta'_{Sg}(H_k)$ of a graph $H_k;~k=1,2,\ldots,n$. Any vertex $v_p^k\in \eta'_{Sg}(H_k)$ cannot be covered by a vertex in $\eta'_{Sg}(H_k)$ and a vertex in $V(G~\Tilde{\circ}\overset{n}{\underset{i=1}{ \Lambda}} H_i)\smallsetminus V(H_k)$.
    \end{lemma}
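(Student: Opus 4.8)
The plan is to argue by the structure of geodesics in $G~\Tilde{\circ}\overset{n}{\underset{i=1}{\Lambda}} H_i$ that pass through a vertex $v_p^k \in V(H_k)$. First I would observe that the only neighbours of $v_p^k$ in the corona product are $u_k$ (the $k$-th vertex of $G$) together with the vertices of $H_k$ that are adjacent to $v_p^k$ inside $H_k$; this is immediate from the definition of the generalized corona, since edges are added only between $u_k$ and the whole of $V(H_k)$. Consequently, any geodesic that contains $v_p^k$ as an internal vertex must enter and leave $v_p^k$ through two of these neighbours.

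Next I would take the two endpoints of a hypothetical covering geodesic: one vertex $v_r^k \in \eta'_{Sg}(H_k)$ and one vertex $w \in V(G~\Tilde{\circ}\overset{n}{\underset{i=1}{\Lambda}} H_i)\smallsetminus V(H_k)$. The key step is to pin down what a geodesic from $v_r^k$ to $w$ looks like. Since $w$ lies outside $H_k$, any path from $v_r^k$ to $w$ must leave the set $V(H_k)\cup\{u_k\}$, and the only exit from that set is through $u_k$; hence every $v_r^k$–$w$ path passes through $u_k$, and in particular $d(v_r^k,w)=d(v_r^k,u_k)+d(u_k,w)$. Now $d(v_r^k,u_k)=1$ because every vertex of $H_k$ is adjacent to $u_k$. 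So a geodesic from $v_r^k$ to $w$ is the edge $v_r^k u_k$ followed by a geodesic from $u_k$ to $w$; its vertex set is $\{v_r^k\}\cup\{u_k\}\cup V(\text{geodesic}(u_k,w))$, none of which is a vertex of $H_k$ other than $v_r^k$ itself. Therefore such a geodesic cannot contain $v_p^k$ (which is a vertex of $H_k$ distinct from $v_r^k$, as $v_p^k$ is meant to be \emph{covered}, i.e. an internal vertex, not an endpoint), and hence the pair $(v_r^k, w)$ cannot cover $v_p^k$.

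Finally I would note that we must also rule out the degenerate possibility that $v_p^k$ is covered as an \emph{endpoint} rather than internally; but "covered by a pair" in the strong geodetic setting means lying on the fixed geodesic assigned to that pair, and a vertex used to cover others is exactly a non-endpoint on that geodesic, so the argument above suffices. Combining these observations with the fact that $v_r^k$ and $w$ were arbitrary gives the lemma. The main obstacle I anticipate is purely expository: being careful about the distinction between a vertex serving as an endpoint of a covering geodesic versus being an internal (genuinely covered) vertex, and making the "every exit from $H_k$ goes through $u_k$" claim airtight by appealing directly to the corona construction rather than hand-waving about connectivity.
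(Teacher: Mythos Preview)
Your proposal is correct and follows essentially the same idea as the paper: both hinge on the fact that $u_k$ is the unique gateway from $V(H_k)$ to the rest of the corona and that every vertex of $H_k$ is already adjacent to $u_k$, so no geodesic from $v_r^k\in\eta'_{Sg}(H_k)$ to a vertex outside $V(H_k)$ can pick up an additional $H_k$-vertex. Your version is in fact more carefully argued than the paper's (which only explicitly writes down the subpath $(v_i^k,v_p^k,u_k)$ and contradicts it with the edge $v_i^k u_k$), but the underlying mechanism is the same.
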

    
    \begin{proof}
        Clearly, $u_k$ is adjacent to each $v_i^k\in V(H_k)$. Suppose $v_i^k$ adjacent to $v_p^k$ and there exists a geodesic $(v_i^k,v_p^k,u_k)$ that covers $v_p^k$, but clearly $(v_i^k,u_k)$ is a path of shortest length from $v_i^k$ and $u_k$, this is a contradiction to the fact that $(v_i^k,v_p^k,u_k)$ is a geodesic path. Hence we cannot cover a vertex in $\eta'_{Sg}(H_k)~k=1,2,\ldots,n$ by a vertex in $\eta'_{Sg}(G_2^k)$ and a vertex in $V(G~\Tilde{\circ}\overset{n}{\underset{i=1}{ \Lambda}} H_i)\smallsetminus V(H_k)$.
    \end{proof}
%
%
    \begin{theorem}\label{Theorem 1}
        Let $G,H_1,H_2,\ldots,H_n$, where $|V(G)|=n$ be $N+1$ graphs. If the strong 2-geodetic basis of $H_i$ is $\eta'_{Sg}(H_i)=\{b_1^i,b_2^i,\ldots,b_{s_i}^i\},~i=1,2,\ldots,n$, then the strong geodetic basis of the generalized corona product $\eta_{Sg}(G~\Tilde{\circ}\overset{n}{\underset{i=1}{ \Lambda}} H_i)=\{b_1^1,b_2^1,\ldots,b_{s_1}^1,b_1^2,b_2^2,\ldots,b_{s_n-1}^n,b_{s_n}^n\}$ and the strong geodetic number $Sg(G~\Tilde{\circ}\overset{n}{\underset{i=1}{ \Lambda}} H_i)=\overset{n}{\underset{i=1}{\sum}}s_i$.
    \end{theorem}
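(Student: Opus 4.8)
The plan is to show that the proposed set $S = \bigcup_{i=1}^{n}\eta'_{Sg}(H_i)$ is a strong geodetic set of the generalized corona product, and then argue minimality so that $Sg(G\ \tilde\circ\ \Lambda_{i=1}^n H_i) = \sum_{i=1}^n s_i$. First I would establish that $S$ covers every vertex. The vertices of the product split into the $u_i$'s (the base copy of $G$) and the vertices of the various $H_i$'s. For a vertex $v^i_p \in V(H_i)$: if $v^i_p \notin \eta'_{Sg}(H_i)$ it is covered, inside $H_i$, by a fixed $2$-geodesic between two basis vertices $b^i_a, b^i_b$; by Lemma~\ref{Lemma 2} this $2$-geodesic of $H_i$ remains a geodesic of the product (a path $b^i_a - u_i - b^i_b$ of length $2$ when they were nonadjacent, or the original length-$1$ or length-$2$ path), so $v^i_p$ is still covered by the same pair. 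If $v^i_p \in \eta'_{Sg}(H_i) \subseteq S$, it needs no covering. For a base vertex $u_i$: by Lemma~\ref{Lemma 1}, $u_i$ lies on a geodesic between any $v^i_p \in V(H_i)$ and any $v^j_q \in V(H_j)$ for $j \ne i$; since each $H_i$ contributes at least one basis vertex to $S$ (as $s_i \ge 1$ for a connected graph on $\ge 1$ vertex — here $\ge 2$), we can pick such a pair from $S$ to cover $u_i$.

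The more delicate point is the \emph{uniqueness/assignment} requirement: we must exhibit a single fixed geodesic for each pair of vertices of $S$, with distinct pairs carrying internally-disjoint-enough geodesics so that the union of vertex sets is exactly $V(G\ \tilde\circ\ \Lambda H_i)$ and no vertex is "double-assigned" in a way that breaks strongness. The natural bookkeeping: within each block $H_i$, reuse the strong $2$-geodetic assignment of $\eta'_{Sg}(H_i)$ verbatim — this is legitimate because, by Lemma~\ref{Lemma 3}, a vertex of $\eta'_{Sg}(H_i)$ cannot be covered by mixing a vertex of $\eta'_{Sg}(H_i)$ with a vertex outside $V(H_i)$, so no interference is introduced across blocks for the $H_i$-internal vertices. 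For the base vertices $u_i$, assign to each $u_i$ one pair crossing from $\eta'_{Sg}(H_i)$ to some other block, chosen so that the $n$ chosen cross-pairs are pairwise distinct (possible since $n \ge 2$ and there are $\binom{n}{2}$ pairs of blocks available, each block having available basis vertices); the remaining pairs of $S$ can be assigned arbitrary fixed geodesics since all vertices are already covered. I would write out this assignment explicitly and check that $V(\tilde I[S]) = V(G\ \tilde\circ\ \Lambda H_i)$.

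For minimality, the key observation is that each copy $H_i$ must contribute at least $s_i = |\eta'_{Sg}(H_i)|$ vertices to \emph{any} strong geodetic set of the product. Here is the crux: a vertex of $H_i$ can only be covered by a geodesic whose endpoints interact with $H_i$, and by Lemma~\ref{Lemma 3} the only geodesics through a would-be basis vertex of $H_i$ that could cover another vertex of $H_i$ are those lying entirely inside the block or of the $2$-geodesic type within the block; geodesics crossing to other blocks pass through $u_i$ and then into $H_i$ only to endpoints adjacent to $u_i$, and cannot simultaneously cover two nonadjacent vertices of $H_i$ with a unique assignment — so covering $V(H_i)$ strongly forces at least $Sg(H_i) = s_i$ of its vertices into the set (this is where I invoke that $\eta'_{Sg}(H_i)$ is a \emph{minimum} strong $2$-geodetic cover, and Result~\ref{Result 1}/the $2$-geodesic structure to identify strong geodetic covering inside the block with strong $2$-geodetic covering). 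Summing over $i$ gives the lower bound $\sum s_i$, matching the explicit set.

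\textbf{Main obstacle.} The hardest step is the minimality argument — specifically, rigorously showing that geodesics crossing block boundaries cannot be leveraged to reduce the per-block contribution below $s_i$, i.e., that strong geodetic covering of $V(H_i)$ within the product is no cheaper than strong $2$-geodetic covering of $H_i$ in isolation. This requires a careful case analysis of what a geodesic of the product looks like once at least one endpoint lies in $H_i$ (lengths are capped at $2$ within a block by Lemma~\ref{Lemma 2}, and any detour out of $H_i$ must pass through $u_i$), and then matching this against the definition of a strong $2$-geodetic cover. The covering and assignment steps, by contrast, are essentially direct applications of Lemmas~\ref{Lemma 1}--\ref{Lemma 3}.
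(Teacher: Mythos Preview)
Your proposal is correct and follows essentially the same route as the paper: use Lemma~\ref{Lemma 1} to cover the base vertices $u_i$, the strong $2$-geodetic bases of the $H_i$ (via Lemma~\ref{Lemma 2}) to cover the block vertices, and Lemma~\ref{Lemma 3} to argue minimality by showing no cross-block geodesic can substitute for a basis vertex of any $H_i$. If anything, you are more careful than the paper on two points the paper glosses over---the explicit assignment of pairwise-distinct cross-block geodesics to the $u_i$'s (needed for the ``strong'' condition), and the case analysis behind the per-block lower bound $s_i$---so your identification of the minimality step as the main obstacle is apt, but the paper handles it with the same one-line appeal to Lemma~\ref{Lemma 3} and the basis property of $\eta'_{Sg}(H_i)$.
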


    \begin{proof}
        Let $G$ be a graph of order $n$ and $H_1,H_2,\ldots,H_n$ be graphs(not necessarily isomorphic) of order $t_1,t_2,\ldots,t_n$. Let $V(H_i)=\{v^i_1,v^i_2,\ldots,v^i_{t_i}\}$ and $\eta'_{Sg}(H_i)=\{b_1^i,b_2^i,\ldots,b_{s_1}^i\}$ be the fixed 2-geodetic basis of $H_i,~i=1,2,\ldots,n$.
        Now by Lemma \ref{Lemma 1} each vertex of $G$ can be covered by the vertices in $\overset{n}{\underset{i=1}{\cup}}\eta'_{Sg}(H_i)$. Also by Lemma \ref{Lemma 3} we have each $\eta'_{Sg}(H_i),~i=1,2,\ldots,n$, remains unchanged. So, $\overset{n}{\underset{i=1}{\cup}}\eta'_{Sg}(H_i)=\{b_1^1,b_2^1,\ldots,b_{s_1}^1,b_1^2,b_2^2,\ldots,b_{s_n-1}^n,b_{s_n}^n\}$ covers $G~\Tilde{\circ}\overset{n}{\underset{i=1}{ \Lambda}} H_i$. Now, if we remove any vertex from $\overset{n}{\underset{i=1}{\cup}}\eta'_{Sg}(H_i)$, it will not cover $G~\Tilde{\circ}\overset{n}{\underset{i=1}{ \Lambda}} H_i$ as each $\eta'_{Sg}(H_i)$ is a 2-geodetic basis of $H_i$ and by Lemme \ref{Lemma 3} vertices in $H_i$ cannot be covered by a vertex in $\eta'_{Sg}(H_i)$ and a vertex in $V(G~\Tilde{\circ}\overset{n}{\underset{i=1}{ \Lambda}} H_i)\smallsetminus V(H_k),$ for all $i=1,2,\ldots,n$. Hence, $\overset{n}{\underset{i=1}{\cup}}\eta'_{Sg}(H_i)$ is a geodetic basis of $G~\Tilde{\circ}\overset{n}{\underset{i=1}{ \Lambda}} H_i$.\\ Next, as each $\eta'_{Sg}(H_i)$ are disjoint we have the strong geodetic number, $$Sg(G~\Tilde{\circ}\overset{n}{\underset{i=1}{ \Lambda}} H_i)=\overset{n}{\underset{i=1}{\sum}}s_i.$$
    \end{proof}
%
%
    \begin{corollary} \label{Corollary 1}
        If each $H_i,~i=1,2,\ldots,n$ are isomorphic denoted by $H$ of order $m$, and let the strong 2-geodetic set of H be $\eta'_{Sg}(H)=\{b_1,b_2,\ldots,b_s\}$. Then the generalized corona product reduces to the corona product, and the strong geodetic set is given by $\eta(G\circ H)=\{b^1_1,b^1_2,\ldots,b^1_s,b^2_1,\ldots,b^n_{s-1},b^n_s\}$. And the strong geodetic number $Sg(G\circ H)=n\cdot {Sg'}(H)$.
    \end{corollary}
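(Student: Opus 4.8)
The plan is to derive Corollary~\ref{Corollary 1} as a direct specialization of Theorem~\ref{Theorem 1}. First I would observe that when every $H_i$ is isomorphic to a fixed graph $H$ of order $m$, the generalized corona $G~\Tilde{\circ}\overset{n}{\underset{i=1}{\Lambda}} H_i$ is by definition exactly the classical corona product $G\circ H$: we take one copy of $G$ and $n$ copies of $H$, and join the $i^\text{th}$ vertex of $G$ to all vertices of the $i^\text{th}$ copy. This identification is immediate from the definition quoted in the preliminaries, so it requires only a sentence.

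Next I would apply Theorem~\ref{Theorem 1} with each $H_i = H$. Since the strong $2$-geodetic basis $\eta'_{Sg}(H) = \{b_1, b_2, \ldots, b_s\}$ has cardinality $s = Sg'(H)$, the theorem gives that the strong geodetic basis of $G\circ H$ is the union $\bigcup_{i=1}^n \eta'_{Sg}(H^{(i)})$, where $H^{(i)}$ denotes the $i^\text{th}$ copy of $H$; writing $b^i_j$ for the vertex of $H^{(i)}$ corresponding to $b_j$, this union is exactly $\{b^1_1, \ldots, b^1_s, b^2_1, \ldots, b^n_{s-1}, b^n_s\}$, as claimed. For the numerical statement, Theorem~\ref{Theorem 1} yields $Sg(G\circ H) = \sum_{i=1}^n s_i$ with every $s_i = s = Sg'(H)$, hence $Sg(G\circ H) = n\cdot Sg'(H)$.

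I expect no genuine obstacle here: the only thing to be careful about is the bookkeeping between the paper's notation $Sg'(H)$ for the strong $2$-geodetic number and $s$ for the size of a strong $2$-geodetic basis of $H$ — these coincide by the definition of the strong $2$-geodetic number, so I would just note that equality explicitly. The one substantive point worth stating (rather than glossing over) is that the $n$ copies of $\eta'_{Sg}(H)$ sit in vertex-disjoint copies of $H$ inside $G\circ H$, which is what licenses adding the cardinalities without overcounting; this is already part of the content of Theorem~\ref{Theorem 1} and I would simply invoke it.
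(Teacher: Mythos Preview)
Your proposal is correct and follows exactly the approach of the paper, which simply states that the corollary follows directly from Theorem~\ref{Theorem 1}. Your write-up is in fact more detailed than the paper's own one-line proof, but the substance is identical: specialize Theorem~\ref{Theorem 1} to the case $H_i\cong H$ for all $i$, so that each $s_i=s=Sg'(H)$ and the disjointness of the copies gives $Sg(G\circ H)=n\cdot Sg'(H)$.
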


    \begin{proof}
        The proof follows directly from Theorem \ref{Theorem 1}.
    \end{proof}
%
%
    \begin{prop}\label{Proposition 1}
        $v_p^i,~1\leq p\leq t_i$ and $v_q^j,~1\leq q\leq t_j$ are antipodal in $G~\Tilde{\circ}\overset{n}{\underset{i=1}{ \Lambda}} H_i$ if and only if $u_i$ and $u_j$ are antipodal in $G$.
    \end{prop}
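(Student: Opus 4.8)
The plan is to reduce the statement about antipodality to an explicit distance computation in the generalized corona. Write $\Gamma = G~\Tilde{\circ}\overset{n}{\underset{i=1}{ \Lambda}} H_i$ and $D = diam(G)$; since $G$ is connected with at least two vertices, $D\geq 1$, and the degenerate situation $i=j$ (or $n=1$) is vacuous, because then $u_i$ is never antipodal to $u_j=u_i$ and, as the argument below shows, $v_p^i$ is never antipodal to $v_q^i$.

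First I would pin down the relevant distances in $\Gamma$. The edge set of $\Gamma$ consists of the edges of $G$, the edges of each $H_i$, and the ``spokes'' joining $u_i$ to every vertex of $H_i$; in particular, the only vertex of the block $B_i:=\{u_i\}\cup V(H_i)$ having a neighbour outside $B_i$ is $u_i$ itself. Three consequences follow: (i) $d_\Gamma(u_i,u_j)=d_G(u_i,u_j)$ — a shortest $u_i$–$u_j$ walk never gains from entering some block $B_k$ ($k\neq i,j$), since it must re-exit through the same vertex $u_k$, and an analogous exchange removes any excursion into $V(H_i)$ or $V(H_j)$; this short exchange argument is the one mildly delicate point; (ii) for $i\neq j$, any $v_p^i$–$v_q^j$ path must leave $B_i$ through $u_i$ and enter $B_j$ through $u_j$, and one checks the exit from $B_i$ precedes the final entry to $B_j$, so $d_\Gamma(v_p^i,v_q^j)=1+d_G(u_i,u_j)+1=d_G(u_i,u_j)+2$, realised by $v_p^i,u_i,(\text{a }G\text{-geodesic}),u_j,v_q^j$; (iii) $d_\Gamma(v_p^i,v_q^i)\leq 2$ by Lemma \ref{Lemma 2} (and $=1$ if they are adjacent in $H_i$).

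From (i)–(iii), comparing all cases (within a block: $\leq 2$; $u_i$ to $v_p^j$: $d_G(u_i,u_j)+1$; $v_p^i$ to $v_q^j$: $d_G(u_i,u_j)+2$), the maximum distance in $\Gamma$ is $\max_{i\neq j}\bigl(d_G(u_i,u_j)+2\bigr)=D+2$, attained by any $v_p^i\in V(H_i)$, $v_q^j\in V(H_j)$ with $d_G(u_i,u_j)=D$; hence $diam(\Gamma)=D+2$. The equivalence is then immediate: if $u_i,u_j$ are antipodal in $G$, then $d_G(u_i,u_j)=D$, so (ii) gives $d_\Gamma(v_p^i,v_q^j)=D+2=diam(\Gamma)$, i.e. $v_p^i,v_q^j$ are antipodal in $\Gamma$; conversely, if $v_p^i,v_q^j$ are antipodal in $\Gamma$, then $d_\Gamma(v_p^i,v_q^j)=D+2\geq 3$, which by (iii) forces $i\neq j$, whence (ii) yields $d_G(u_i,u_j)=D=diam(G)$, i.e. $u_i,u_j$ are antipodal in $G$. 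The only real obstacle is justifying (i) and the ``must pass through $u_i$ then $u_j$'' claim in (ii); both are short structural arguments about the block decomposition of $\Gamma$'s edge set.
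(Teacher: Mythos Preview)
Your proof is correct and follows essentially the same route as the paper: establish that $diam(\Gamma)=diam(G)+2$ and that $d_\Gamma(v_p^i,v_q^j)=d_G(u_i,u_j)+2$ for $i\neq j$, then read off the equivalence. The paper simply asserts these distance facts without argument, whereas you supply the block-structure justification and also dispose of the $i=j$ case, so your version is the more complete of the two.
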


    \begin{proof}
        For each $i=1,2,\ldots,n$, $v_p^i$ is adjacent to $u_i$, $1\leq p\leq t_i$. So, if the diameter of $G$ $diam(G)=k$, then $diam(G~\Tilde{\circ}\overset{n}{\underset{i=1}{ \Lambda}} H_i)=k+2$.\\ Suppose, $u_i$ and $u_j$ are antipodal. Then $d(v_p^i,v_q^j)=d(v_p^i,u_i)+d(u_i,u_j)+d(u_j,v_q^j)=k+2=diam(G~\Tilde{\circ}\overset{n}{\underset{i=1}{ \Lambda}} H_i)$. Hence, $v_p^i,~1\leq p\leq s_i$ and $v_q^j,~1\leq q\leq s_j$ are antipodal in $G~\Tilde{\circ}\overset{n}{\underset{i=1}{ \Lambda}} H_i$.\\
        Conversely, let $v_p^i$ and $v_q^j$ be antipodal, then $d(v_p^i,v_q^j)=diam(G~\Tilde{\circ}\overset{n}{\underset{i=1}{ \Lambda}} H_i)$. Then $d(u_i,u_j)=diam(G~\Tilde{\circ}\overset{n}{\underset{i=1}{ \Lambda}} H_i)-2=diam(G)$. Hence, $u_i$ and $u_j$ are antipodal.
    \end{proof}
%
%
    \begin{example}
        Let $G=C_3$, and $H_1=P_2$, $H_2=K_4, H_3=C_5$.
    \end{example}
    \textbf{Solution:} Here, the possible strong 2-geodetic set of $H_1,H_2,H_3$ are given by $\eta'_{Sg}(H_1)=\{a_1,a_2\};$ $\eta'_{Sg}(H_2)=\{b_1,b_2,b_3,b_4\};$ $\eta'_{Sg}(H_3)=\{c_1,c_3,c_4\}$. So, by Theorem \ref{Theorem 1} we have the strong geodetic set $\eta_{Sg}(G~\Tilde{\circ}\overset{3}{\underset{i=1}{ \Lambda}} H_i)=\{a_1,a_2,b_1,b_2,b_3,b_4,c_1,c_3,c_4\}$. And the Strong geodetic number $Sg(G~\Tilde{\circ}\overset{3}{\underset{i=1}{ \Lambda}} H_i)=2+4+3=9$.

    From \cite{SHARMA201714} we know the corona graphs generated by the seed graph $G$ of order $n$ is given by $G^{(m+1)}=G^{(m)}\circ G$, where $G^{(0)}=G$. The number of vertices in $G^{(m)}$ is $|V^{(m)}|=n(n+1)^m$. Let, $G$ be a graph with 2-geodetic set $\eta'_{Sg}(G)=\{b_1,b_2,\ldots,b_s\}$, and the 2-geodetic set of the $i^\text{th}$ copy of $G$ is given by $\eta'_{Sg}(G_i)=\{b^i_1,b^i_2,\ldots,b^i_s\}~i=1,2,\ldots,n$. Then we have the following theorem for the corona graphs.
%
%
    \begin{theorem}\label{Theorem 1a}
        The strong geodetic set of the corona graphs $G^{(m+1)}$ is given by $\eta_{Sg}(G^{(m+1)})=\{b^1_1,b^1_2,\ldots,b^1_s,b^2_1,b^2_2,\ldots,b^{n(n+1)^m}_{s-1},b^{n(n+1)^m}_{s}\}$, and the strong geodetic number is given by\\ \ $Sg(G^{(m+1)})=s\cdot n(n+1)^m$.
    \end{theorem}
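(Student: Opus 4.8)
The plan is to apply Theorem~\ref{Theorem 1} iteratively (equivalently, Corollary~\ref{Corollary 1} since every factor in a corona iteration is a fresh copy of the same seed graph $G$), and then track the combinatorics of how many copies of $G$ arise at each level. Recall from the construction that $G^{(m+1)} = G^{(m)} \circ G$, where the principal graph is $G^{(m)}$ with $|V^{(m)}| = n(n+1)^m$ vertices, and each of those vertices receives its own pendant copy of $G$. So $G^{(m+1)}$ is exactly a generalized corona product of the base graph $G^{(m)}$ with $n(n+1)^m$ isomorphic copies of $G$.

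First I would invoke Corollary~\ref{Corollary 1}: if the principal graph $P$ has $|V(P)| = N$ vertices and each is coconed with a copy of $H$ whose strong $2$-geodetic number is $Sg'(H) = s$, then $Sg(P \circ H) = N \cdot s$, and a strong geodetic basis is obtained as the disjoint union, over the $N$ copies, of a fixed strong $2$-geodetic basis $\{b^j_1,\dots,b^j_s\}$ of the $j$-th copy of $H$. Here $P = G^{(m)}$, $N = n(n+1)^m$, $H = G$, $s = Sg'(G)$, which immediately gives $Sg(G^{(m+1)}) = s \cdot n(n+1)^m$ and the claimed description of the basis as the union of the per-copy strong $2$-geodetic bases.

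The one point that needs a little care — and the step I expect to be the main obstacle — is making sure the hypothesis of Corollary~\ref{Corollary 1} is actually met at every level: the corollary requires that the \emph{pendant} factors be attached to the principal graph, and it consumes the strong \emph{$2$-geodetic} basis of those pendant factors, not their ordinary strong geodetic basis. In the iteration $G^{(m+1)} = G^{(m)} \circ G$ the pendant copies are always plain copies of the seed $G$, so the relevant parameter is always $Sg'(G) = s$, which stays constant through the recursion — it is not $Sg(G^{(m)})$ that propagates. Thus there is genuinely no recursion on the strong geodetic number of the growing graph; each application of the corollary only ``sees'' the fixed seed $G$ on the pendant side, and the growth is entirely due to the number of principal vertices. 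I would state this explicitly to avoid the tempting but wrong recursion $Sg(G^{(m+1)}) = (\text{something}) \cdot Sg(G^{(m)})$.

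Finally I would assemble the count: the number of principal vertices at level $m$ is $|V^{(m)}| = n(n+1)^m$ (this is the formula quoted from~\cite{SHARMA201714}, which itself follows by induction from $|V^{(m+1)}| = |V^{(m)}|(n+1)$), so by Corollary~\ref{Corollary 1}, $Sg(G^{(m+1)}) = s \cdot n(n+1)^m$, and the strong geodetic basis is $\{b^1_1,\dots,b^1_s,\; b^2_1,\dots,\; b^{n(n+1)^m}_{s-1}, b^{n(n+1)^m}_s\}$ as claimed, where the superscript indexes the $n(n+1)^m$ pendant copies of $G$ and the subscript runs over the fixed strong $2$-geodetic basis of $G$. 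A brief remark that the union is disjoint (the pendant copies are vertex-disjoint in the corona construction) closes the argument.
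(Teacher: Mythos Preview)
Your proposal is correct and follows essentially the same route as the paper: apply Theorem~\ref{Theorem 1} (via Corollary~\ref{Corollary 1}) to $G^{(m+1)}=G^{(m)}\circ G$, using that the principal graph has $n(n+1)^m$ vertices and each pendant factor is a copy of $G$ with strong $2$-geodetic number $s$. Your exposition is more careful than the paper's---in particular your explicit remark that the relevant parameter is $Sg'(G)$ of the seed rather than anything about $G^{(m)}$ is a point the paper leaves implicit---but the underlying argument is identical.
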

    \begin{proof}
        By Theorem \ref{Theorem 1} it is clear that the strong geodetic set of $G^{(m+1)}$ will not contain any vertex from the graph $G^{(m)}$ and will only have vertices from the strong 2-geodetic set of the $n(n+1)^m$ copies of $G$ in the corona product of $G^{(m)}\circ G$. Hence the strong geodetic set of the corona graphs $G^{(m+1)}$ is given by $\eta_{Sg}(G^{(m+1)})=\{b^1_1,b^1_2,\ldots,b^1_s,b^2_1,b^2_2,\ldots,b^{n(n+1)^m}_{s-1},b^{n(n+1)^m}_{s}\}$, and the strong geodetic number is given by $Sg(G^{(m+1)})=|\eta_{Sg}(G^{(m+1)})|=s\cdot n(n+1)^m$.
    \end{proof}
    
%
%
    \subsection{Generalized edge corona}
        Let $G$ be a graph of order $n$ and $H_1,H_2,\ldots,H_n$ be $n$ graphs of order $t_1,t_2,\ldots,t_n$. We denote the set of vertices by $V(G)=\{u_1,u_2,\ldots,u_n\}$, and $V(H_i)=\{v^i_1,v^i_2,\ldots,v^i_{t_i}\}$. The generalized edge corona is given by $G~\Tilde{\diamond}\overset{n}{\underset{i=1}{ \Lambda}} H_i$, and we have the following:
%
%
        \begin{lemma}\label{Lemma 4}
            Each $u_i\in V(G)$ can be covered by vertices in $\overset{n}{\underset{i=1}{\cup}}\eta'_{Sg}(H_i)$, if any of the following conditions holds:
            \begin{enumerate}
                \item If $d(u_i)\ge2$, then for some $e_a,e_b\in E(u_i)(a\neq b)$, $(v^a_p,v^b_q)~ \big[1\le p\le s_a,~ 1\le q\le s_b$, $v_p^a\in \eta'_{Sg}(H_a),~ v_q^b\in \eta'_{Sg}(H_a)\big]$ will cover $u_i$.
                \item If $d(u_i)=1$(i.e., $u_i$ is a pendent vertex and $e_a$(say) is the edge connecting $u_i$), then a fixed 2-$geodesic(v_p^a,v_q^a)$, $[v_p^a,v_q^a\in\eta'_{Sg}(H_a),~1\le p<q\le s_a]$ will cover $u_i$ if and only if there exists at least three geodesic $(v_p^a,v),~ (v_p^a,v_q^a)$ and $(v,v_q^a)$, $[v\in \eta'_{Sg}(H_a)]$ of length 2 covering only a particular vertex $(u\in V(H_a))$.  
            \end{enumerate}
        \end{lemma}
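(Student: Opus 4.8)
The plan is to analyze the structure of the generalized edge corona $G~\Tilde{\diamond}\overset{n}{\underset{i=1}{\Lambda}} H_i$ near a vertex $u_i\in V(G)$. Recall that in this product, the two endpoints of the $a^\text{th}$ edge $e_a$ of $G$ are joined to every vertex of $H_a$; hence if $u_i$ is an endpoint of $e_a$, then $u_i$ is adjacent to every vertex of $V(H_a)$, in particular to every vertex of the fixed basis $\eta'_{Sg}(H_a)$. This adjacency fact is the workhorse for both cases, and I would establish it first as the common ground.

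For case (1): suppose $d(u_i)\ge 2$, so $u_i$ is an endpoint of at least two distinct edges $e_a,e_b\in E(u_i)$ with $a\ne b$. Pick any $v_p^a\in\eta'_{Sg}(H_a)$ and $v_q^b\in\eta'_{Sg}(H_b)$. By the adjacency fact, $u_i\sim v_p^a$ and $u_i\sim v_q^b$, so $(v_p^a, u_i, v_q^b)$ is a path of length $2$. I would argue this is in fact a geodesic: since $H_a$ and $H_b$ are distinct copies glued only through their common endpoints in $G$, a $v_p^a$–$v_q^b$ path cannot have length $1$ (they are non-adjacent, lying in different copies), so $d(v_p^a,v_q^b)=2$ and the path through $u_i$ is a geodesic covering $u_i$. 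This mirrors Lemma~\ref{Lemma 1} from the generalized corona case. Since $v_p^a, v_q^b$ range over the bases, $u_i$ is covered by vertices in $\overset{n}{\underset{i=1}{\cup}}\eta'_{Sg}(H_i)$.

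Case (2) is the delicate one and I expect it to be the main obstacle. Here $u_i$ is a pendent vertex incident only to the edge $e_a$; let $w_i$ be the other endpoint of $e_a$. Both $u_i$ and $w_i$ are adjacent to all of $V(H_a)$. Any attempt to cover $u_i$ must use a geodesic between two basis vertices, and since $u_i$ has degree $1+t_a$ but all its neighbors other than those in $H_a$ reduce to $w_i$, the only geodesics through $u_i$ of length $2$ are of the form $(v_p^a, u_i, v_q^a)$ with $v_p^a, v_q^a\in V(H_a)$. So $u_i$ can only be covered by a pair from $\eta'_{Sg}(H_a)$. The subtlety is that the strong geodetic condition demands \emph{unique} geodesics: the pair $(v_p^a, v_q^a)$ already has its fixed geodesic inside $H_a$ used to cover some internal vertex of $H_a$, and the path $(v_p^a, u_i, v_q^a)$ is a competing geodesic of the same length $2$. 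One can reassign the fixed geodesic of $(v_p^a,v_q^a)$ to be the one through $u_i$ only if the vertex previously covered inside $H_a$ by this pair can instead be covered by some other pair. This is exactly where the ``at least three geodesics of length $2$'' hypothesis enters: if vertices $v_p^a, v_q^a, v$ in the basis of $H_a$ pairwise realize length-$2$ geodesics all passing through one common vertex $u\in V(H_a)$, then there is redundancy — $u$ can be covered by $(v_p^a, v)$ or $(v, v_q^a)$, freeing up the pair $(v_p^a, v_q^a)$ to have its unique geodesic rerouted through $u_i$. I would prove the ``if'' direction by explicitly performing this reassignment, and the ``only if'' direction by showing that without such a triple of redundant length-$2$ geodesics through a common vertex, every pair from $\eta'_{Sg}(H_a)$ is ``committed'' to covering a distinct internal vertex of $H_a$, so no geodesic is available to be rerouted to $u_i$ without breaking the uniqueness or the covering of $H_a$. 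Care is needed to confirm that $d(v_p^a,v_q^a)=2$ in the product (so that the $u_i$-path genuinely competes) and that no shorter route or alternative length-$2$ route spoils the bookkeeping; I would handle these by invoking the adjacency fact and the fact that distinct $H_j$ copies interact only through $G$.
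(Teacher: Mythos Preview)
Your proposal is correct and follows essentially the same approach as the paper's proof: in case (1) you exhibit the length-$2$ path $(v_p^a,u_i,v_q^b)$ through $u_i$ using basis vertices from two distinct incident copies, and in case (2) you argue the reassignment-of-geodesics idea---freeing the pair $(v_p^a,v_q^a)$ to route through $u_i$ precisely when a third basis vertex $v$ provides redundant length-$2$ coverage of the internal vertex $u$. Your treatment is in fact slightly more careful than the paper's (you explicitly justify why $(v_p^a,u_i,v_q^b)$ is a geodesic and why only pairs from $\eta'_{Sg}(H_a)$ can reach the pendent $u_i$), but the underlying argument is the same.
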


        \begin{proof} For arbitrary $u_i\in V(G)$;
            \begin{enumerate}
                \item When $d(u_i)\ge 2$, then at least two $e_a,e_b\in E(u_i)$, then each $v_p^a,~ 1\le p\le s_a$ and $v_q^b,~ 1\le q\le s_b$ are adjacent to $u_i$. The path $(v_p^a,u_i,v_q^b)$ covers $u_i$, and as it is one of the geodesic paths covering $u_i$, we can fix this geodesic to cover $u_i$.
                
                \item For, $v_p^q,v_q^a\in \eta'_{Sg}(H_a)$, let the $geodesic(v_p^a,v_q^a)$ of length 2 covers $u_i$, then $v_p^a$ and $v_q^a$ are adjacent to $u_i$. Now, if $u\in V(H_a)$ is covered by a $geodesic(v_p^a,v_q^a)$ of length 2, which is fixed in $\eta'_{Sg}(H_a)$ to cover $u$, then the $geodesic(v_p^a,v_q^a)$ cannot cover $u_i$. Hence, there exists at least two more 2-geodesic $(v_p^a,v),$ and $(v,v_q^a)~ v\in \eta'_{Sg}(H_a)$ such that one of the remaining two 2-geodesic can be fixed to cover $u\in V(H_a)$.\\
                Conversely, if there exists at least three geodesic $(v_p^a,v),~ (v_p^a,v_q^a)$ and $(v,v_q^a)$, $[v\in \eta'_{Sg}(H_a)]$ of length 2 covering a vertex $u\in V(H_a)$, then without loss of generality we can fix any of the geodesic $(v_p^a,v)$ or $(v_q^a,v),~ v\in\eta'_{Sg}(H_a)$ to cover $u$ and the 2-$geodesic(v_p^a,v_q^a)$ can be fixed to cover $u_i$.\\
                Lastly, if there exists a unique 2-geodesic to cover each $u$ in $V(H_a)$, then $u_i$ cannot be covered by vertices in $\eta'_{Sg}(H_a)$.
            \end{enumerate}
        \end{proof}
%
%
        \begin{lemma}\label{Lemma 5}
            For a fixed strong 2-geodetic cover $\eta'_{Sg}(H_k)$ of a graph $H_k;~k=1,2,\ldots,n$. Any vertex $v_p^k\in \eta'_{Sg}(H_k)$ cannot be covered by a vertex in $\eta'_{Sg}(H_k)$ and a vertex in $V(G~\Tilde{\diamond}\overset{n}{\underset{i=1}{ \Lambda}} H_i)\smallsetminus V(H_k)$.
        \end{lemma}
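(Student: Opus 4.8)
The plan is to copy the strategy of Lemma~\ref{Lemma 3}, replacing the unique ``gateway'' vertex $u_k$ of the ordinary corona by the \emph{two} end vertices of the edge of $G$ that $H_k$ is hung on. First I would set up notation: let $e_k=u_au_b$ denote the $k$-th edge of $G$, so that, by the definition of the generalized edge corona, every vertex of $H_k$ is adjacent in $G~\Tilde{\diamond}\overset{n}{\underset{i=1}{\Lambda}}H_i$ to both $u_a$ and $u_b$, and---this is the structural fact that drives everything---$u_a$ and $u_b$ are the only vertices outside $V(H_k)$ having a neighbour inside $V(H_k)$. Consequently, every path from a vertex of $V(H_k)$ to a vertex of $V(G~\Tilde{\diamond}\overset{n}{\underset{i=1}{\Lambda}}H_i)\smallsetminus V(H_k)$ must pass through $u_a$ or $u_b$.

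Then I would argue by contradiction. Suppose some $v_p^k\in\eta'_{Sg}(H_k)$ lies on a fixed $geodesic(s,t)$ with $s\in\eta'_{Sg}(H_k)$, $s\neq v_p^k$, and $t\notin V(H_k)$. Writing this geodesic as $s=x_0,x_1,\ldots,x_m=t$, let $\ell$ be the least index with $x_\ell\notin V(H_k)$; then $1\le\ell\le m$ (it exists since $x_m=t\notin V(H_k)$), and by the observation above $x_\ell\in\{u_a,u_b\}$. Since $s\in V(H_k)$ is adjacent to $x_\ell$, we have $d(s,x_\ell)=1$; because every subpath of a geodesic is a geodesic, the subpath $x_0,x_1,\ldots,x_\ell$ of length $\ell$ can be shortest only if $\ell=1$. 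Hence $x_0=s$ is the only vertex of $V(H_k)$ lying on the whole geodesic.

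Finally, since $v_p^k\in V(H_k)$ and $v_p^k\neq s$, the vertex $v_p^k$ cannot lie on $geodesic(s,t)$, a contradiction; the degenerate case $t\in\{u_a,u_b\}$ is already included, for then $d(s,t)=1$ and the geodesic is just the edge $st$. This shows that no vertex of $\eta'_{Sg}(H_k)$ can be covered by a pair formed by a vertex of $\eta'_{Sg}(H_k)$ and a vertex of $V(G~\Tilde{\diamond}\overset{n}{\underset{i=1}{\Lambda}}H_i)\smallsetminus V(H_k)$, which is exactly the claim.

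The only delicate point---and essentially the sole obstacle---is the bookkeeping around having two gateway vertices instead of one: one has to make sure the ``first edge out of $H_k$'' argument is applied to a genuine geodesic and that the case $t\in\{u_a,u_b\}$ is not overlooked. Beyond that, no case analysis on the actual location of $t$ (whether it sits in $G$ or in some other $H_j$) is required, since only the very first step of the path leaving $H_k$ is ever used.
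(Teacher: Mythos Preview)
Your proof is correct and follows exactly the route the paper intends: its own proof of this lemma reads, in full, ``The proof is similar to Lemma~\ref{Lemma 3}'', and you have carried out precisely that adaptation, replacing the single gateway vertex $u_k$ of the ordinary corona by the two endpoints $u_a,u_b$ of the $k$-th edge. Your write-up is in fact more careful than the paper's own argument for Lemma~\ref{Lemma 3}; the one small point you gloss over---that the geodesic cannot re-enter $V(H_k)$ after $x_1$---follows immediately since any further vertex of $\{u_a,u_b\}$ on the geodesic would again be at distance~$1$ from $s$.
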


        \begin{proof}
            The proof is similar to Lemma \ref{Lemma 3}.
        \end{proof}
%
%
        \begin{theorem}\label{Theorem 2}
            Let $G,H_1,H_2,\ldots,H_n$, where $|V(G)|=n$ be $N+1$ graphs. If the strong 2-geodetic basis of $H_i$ is $\eta'_{Sg}(H_i)=\{b_1^i,b_2^i,\ldots,b_{s_i}^i\},~i=1,2,\ldots,n$, then we have the following results for the strong geodetic basis and the strong geodetic number of the generalized edge corona product:
            \begin{enumerate}
                \item If $G$ is a graphs with no pendent vertices, then $\eta_{Sg}(G~\Tilde{\diamond}\overset{n}{\underset{i=1}{ \Lambda}} H_i)=\{b_1^1,b_2^1,\ldots,b_{s_1}^1,b_1^2,b_2^2,$\\$\ldots,b_{s_n-1}^n,b_{s_n}^n\}$ and $Sg(G~\Tilde{\diamond}\overset{n}{\underset{i=1}{ \Lambda}} H_i)=\overset{n}{\underset{i=1}{\sum}}s_i$.
                \item If $G$ is a graph with $p$ many pendent vertices $\{a_1,a_2,\ldots,a_p\}$. Let $e_{a_i}$ be the edge corresponding to the pendent vertex $a_i$ and $H_{a_i}$ be the corresponding graph. Let $A$ be the subset of $V(G)$ such that for each vertex of $A$, the corresponding graph $H_{a_i}$ has the property that every vertex of that graph is not covered by more than one geodesic of length 2. Then, $\eta_{Sg}(G~\Tilde{\diamond}\overset{n}{\underset{i=1}{ \Lambda}} H_i)=\{b_1^1,b_2^1,\ldots,b_{s_1}^1,b_1^2,b_2^2,\ldots,b_{s_n-1}^n,b_{s_n}^n\}\cup A$ and $Sg(G~\Tilde{\diamond}\overset{n}{\underset{i=1}{ \Lambda}} H_i)=\overset{n}{\underset{i=1}{\sum}}s_i+|A|$.
            \end{enumerate}
            \end{theorem}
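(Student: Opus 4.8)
The plan is to show that the stated set $S=\{b_1^1,b_2^1,\ldots,b_{s_n}^n\}$ (in case~1), respectively $S\cup A$ (in case~2), is a strong geodetic set of $G~\Tilde{\diamond}\overset{n}{\underset{i=1}{ \Lambda}} H_i$, and then that it has minimum cardinality; the strong geodetic number then follows by counting, using that the sets $\eta'_{Sg}(H_i)$ are pairwise disjoint and disjoint from $V(G)\supseteq A$.

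First I would verify that the internal structure of each copy $H_i$ is preserved in the product. Since every vertex of $H_i$ is adjacent in the edge corona to both endpoints of the $i^{\text{th}}$ edge $e_i$ of $G$, adjacencies inside $H_i$ are unchanged and any two non-adjacent vertices of $H_i$ lie at distance exactly $2$ in the product; hence, exactly as in Lemma~\ref{Lemma 2}, every fixed $2$-geodesic realizing the strong $2$-geodetic cover $\eta'_{Sg}(H_i)$ is still a geodesic in the product, so $\bigcup_{i=1}^{n}\eta'_{Sg}(H_i)$ together with these fixed geodesics covers $\bigcup_{i=1}^{n}V(H_i)$. It remains to cover $V(G)$, and here I split on the degree of $u_i$ in $G$. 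If $d(u_i)\ge 2$ I invoke Lemma~\ref{Lemma 4}(1), choosing for each such $u_i$ a pair $e_a,e_b\in E(u_i)$ and a cross pair $(v_p^a,v_q^b)$ whose geodesic $(v_p^a,u_i,v_q^b)$ is fixed to cover $u_i$, taking care that distinct vertices of $G$ receive distinct, previously unused pairs. If $d(u_i)=1$ then $u_i$ is pendent and its only neighbour outside $V(H_a)$ is the other endpoint of $e_a$, which is itself adjacent to every vertex of $H_a$, so the only geodesics through $u_i$ are of the form $(v_p^a,u_i,v_q^a)$ with $v_p^a,v_q^a\in\eta'_{Sg}(H_a)$ non-adjacent. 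By Lemma~\ref{Lemma 4}(2), such a pair can be spared to cover $u_i$ precisely when some vertex of $H_a$ is covered by at least three $2$-geodesics among $\eta'_{Sg}(H_a)$; in that case $u_i$ needs no new set vertex, and otherwise $u_i$ lies in $A$ and is added, with all pairs involving the added vertex $u_i$ (e.g.\ the edge $(u_i,v_p^a)$) assigned trivial geodesics without conflict.

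For minimality I would argue that no element may be deleted. By Lemma~\ref{Lemma 5}, a vertex of $\eta'_{Sg}(H_i)$ cannot be covered by any pair using a vertex of $\eta'_{Sg}(H_i)$ and a vertex outside $V(H_i)$, while inside $H_i$ deleting a vertex destroys the minimum strong $2$-geodetic cover; hence every $b_\ell^i$ is forced, which gives the $\sum_{i=1}^{n}s_i$ term and settles case~1, since when $G$ has no pendent vertex every $u_i$ is handled by Lemma~\ref{Lemma 4}(1). For case~2, each $u_i\in A$ can only be covered by a $2$-geodesic $(v_p^a,u_i,v_q^a)$, and by the defining property of $A$ every vertex of $H_{a_i}$ admits at most one $2$-geodesic in the fixed cover, so no such pair is free; thus $u_i$ is forced into the set, exactly as a pendent vertex is forced by Lemma~\ref{Lemma 0}. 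Counting the forced, pairwise disjoint contributions yields $Sg(G~\Tilde{\diamond}\overset{n}{\underset{i=1}{ \Lambda}} H_i)=\sum_{i=1}^{n}s_i$ in case~1 and $\sum_{i=1}^{n}s_i+|A|$ in case~2.

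The main obstacle I anticipate is the simultaneous bookkeeping of geodesic assignments: one must fix, without repetition, a unique geodesic to every pair of the candidate set so that all vertices are covered at once --- the internal $2$-geodesics of each $H_i$, the cross pairs covering the degree-$\ge 2$ vertices of $G$, and the ``freed'' pairs covering the admissible pendent vertices. This is delicate exactly when some $H_i$ has a very small strong $2$-geodetic basis, so that few cross pairs are available, or when a pair $(v_p^a,v_q^a)$ is contended between covering an interior vertex of $H_a$ and covering the pendent vertex $u_i$; Lemma~\ref{Lemma 4}(2) is the tool that resolves the latter tension, while a careful choice of incident-edge pairs along a spanning structure of $G$ resolves the former.
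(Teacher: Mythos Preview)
Your proposal is correct and follows essentially the same route as the paper: cover each $H_i$ via its strong $2$-geodetic basis (preserved in the product), cover the non-pendent vertices of $G$ via Lemma~\ref{Lemma 4}(1), handle pendent vertices via Lemma~\ref{Lemma 4}(2) and the definition of $A$, and argue minimality through Lemma~\ref{Lemma 5} together with the minimality of each $\eta'_{Sg}(H_i)$. Your explicit attention to the pairwise-distinct assignment of geodesics (the ``bookkeeping'' obstacle) is a point the paper's proof leaves implicit, so your write-up is in fact slightly more careful on that front.
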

            
            \begin{proof}
                Let $G$ be a graph of order $n$ and $H_1,H_2,\ldots,H_n$ be graphs(not necessarily isomorphic) of order $t_1,t_2,\ldots,t_n$. Let $V(H_i)=\{v^i_1,v^i_2,\ldots,v^i_{t_i}\}$ and $\eta'_{Sg}(H_i)=\{b_1^i,b_2^i,\ldots,b_{s_1}^i\}$ be the fixed 2-geodetic basis of $H_i,~i=1,2,\ldots,n$.
                \begin{enumerate}
                    \item Let $G$ be a graph with no pendent vertex, then by Lemma \ref{Lemma 4} each vertex of $G$ is covered by the vertices in $\overset{n}{\underset{i=1}{\cup}}\eta'_{Sg}(H_i)$. Also by Lemma \ref{Lemma 5} we have each $\eta'_{Sg}(H_i),~i=1,2,\ldots,n$, remains unchanged. So, $\overset{n}{\underset{i=1}{\cup}}\eta'_{Sg}(H_i)=\{b_1^1,b_2^1,\ldots,b_{s_1}^1,b_1^2,b_2^2,\ldots,b_{s_n-1}^n,b_{s_n}^n\}$ covers $G~\Tilde{\diamond}\overset{n}{\underset{i=1}{ \Lambda}} H_i$. Now, if we remove any vertex from $\overset{n}{\underset{i=1}{\cup}}\eta'_{Sg}(H_i)$, it will not cover $G~\Tilde{\diamond}\overset{n}{\underset{i=1}{ \Lambda}} H_i$ as each $\eta'_{Sg}(H_i)$ is a 2-geodetic basis of $H_i$ and by Lemme \ref{Lemma 5} vertices in $H_i$ cannot be covered by a vertex in $\eta'_{Sg}(H_i)$ and a vertex in $V(G~\Tilde{\diamond}\overset{n}{\underset{i=1}{ \Lambda}} H_i)\smallsetminus V(H_k),$ for all $i=1,2,\ldots,n$. Hence, $\overset{n}{\underset{i=1}{\cup}}\eta'_{Sg}(H_i)$ is the geodetic basis of $G~\Tilde{\diamond}\overset{n}{\underset{i=1}{ \Lambda}} H_i$. Next, as each $\eta'_{Sg}(H_i)$ are disjoint we have the strong geodetic number, $Sg(G~\Tilde{\diamond}\overset{n}{\underset{i=1}{ \Lambda}} H_i)=\overset{n}{\underset{i=1}{\sum}}s_i$.

                    \item Let $G$ be a graph with $p$ many pendent vertices $\{a_1,a_2,\ldots,a_p\}$. Then, by Lemma \ref{Lemma 5} and Point 1, all the vertices in $V(G)$ are covered except those in the set $A$. So, by Lemma \ref{Lemma 0} these vertices belong to the strong geodetic set $\eta_{Sg}(G~\Tilde{\diamond}\overset{n}{\underset{i=1}{ \Lambda}} H_i)$. Now, following similar arguments as before, we have the strong geodetic set $\eta_{Sg}(G~\Tilde{\diamond}\overset{n}{\underset{i=1}{ \Lambda}} H_i)=\{b_1^1,b_2^1,\ldots,b_{s_1}^1,b_1^2,b_2^2,\ldots,b_{s_n-1}^n,b_{s_n}^n\}\cup A$. Lastly, the strong geodetic number, \\$Sg(G~\Tilde{\diamond}\overset{n}{\underset{i=1}{ \Lambda}} H_i)=\overset{n}{\underset{i=1}{\sum}}s_i+|A|$.
                \end{enumerate}
                
            \end{proof}

%
%
            \begin{corollary}\label{Corollary 2}
                Let $G$ be a graph of order $n$, with $p$ many pendent vertices, $p=1,2,\ldots,n$ denoted by $\{a_1,a_2,\ldots,a_p\}$. If each $H_i,~i=1,2,\ldots,n$ are isomorphic denoted by $H$ of order $m$, and let the strong 2-geodetic set of H be $\eta'_{Sg}(H)=\{b_1,b_2,\ldots,b_s\}$. Then the generalized edge corona product reduces to the edge corona product, and the strong geodetic set is given by $\eta(G\diamond H)=\{b^1_1,b^1_2,\ldots,b^1_s,b^2_1,\ldots,b^n_{s-1},b^n_s\}\cup A$. And the strong geodetic number $Sg(G\diamond H)=n\cdot {Sg'}(H)+|A|$.
            \end{corollary}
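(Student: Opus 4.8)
The plan is to obtain this statement as a direct specialization of Theorem \ref{Theorem 2}(2) to the case where all of the factor graphs coincide. First I would note that when $H_1=H_2=\cdots=H_n=H$, the construction in the definition of the generalized edge corona --- attaching one copy of $H_i$ to the two endpoints of the $i^{\text{th}}$ edge of $G$ --- is literally the classical edge corona $G\diamond H$, since the copies are indistinguishable up to isomorphism. Hence every conclusion already proved for $G~\Tilde{\diamond}\overset{n}{\underset{i=1}{ \Lambda}} H_i$ applies verbatim to $G\diamond H$.

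Next I would match the parameters. Because $\eta'_{Sg}(H)=\{b_1,\ldots,b_s\}$ is a strong $2$-geodetic basis of $H$, each copy $H_i$ has a strong $2$-geodetic basis $\eta'_{Sg}(H_i)=\{b_1^i,\ldots,b_s^i\}$ of size $s_i=s=Sg'(H)$. Substituting $s_i\equiv s$ into the statement of Theorem \ref{Theorem 2}(2) yields the strong geodetic set $\{b_1^1,\ldots,b_s^1,b_1^2,\ldots,b_{s-1}^n,b_s^n\}\cup A$, and the cardinality count becomes $Sg(G\diamond H)=\sum_{i=1}^n s_i+|A|=ns+|A|=n\cdot Sg'(H)+|A|$, which is exactly the asserted formula.

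The only point that merits a word of care is the set $A$. By Theorem \ref{Theorem 2}(2), $A$ consists of those pendent vertices $a_i$ of $G$ for which no vertex of the attached graph $H_{a_i}$ is covered by more than one geodesic of length $2$; by Lemma \ref{Lemma 4}(2) such an $a_i$ cannot be geodominated from within $\eta'_{Sg}(H_{a_i})$, so by Lemma \ref{Lemma 0} it must be included in the strong geodetic set. Since here every $H_{a_i}$ equals the same graph $H$, this condition holds either for all pendent vertices of $G$ simultaneously or for none, whence $|A|\in\{0,p\}$ is determined solely by the fixed covering structure of $H$. I do not expect any genuine obstacle: the whole argument is merely an unwinding of the notation of Theorem \ref{Theorem 2}, and the proof reduces to the single observation that all the summands $s_i$ equal $Sg'(H)$.
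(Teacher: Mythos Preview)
Your proposal is correct and follows exactly the paper's approach: the paper's own proof is the single sentence ``The proof follows directly from Theorem \ref{Theorem 2},'' and you have simply unpacked that specialization by setting $s_i\equiv s=Sg'(H)$. Your additional remark that $|A|\in\{0,p\}$ because all attached graphs equal the same $H$ is a valid observation the paper does not spell out, but it does not change the method.
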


            \begin{proof}
                The proof follows directly from Theorem \ref{Theorem 2}.
            \end{proof}

%
%

    \subsection{Generalized neighborhood corona}
        Let $G$ be a graph of order $n$ and $H_1,H_2,\ldots,H_n$ be $n$ graphs of order $t_1,t_2,\ldots,t_n$, then the generalized neighborhood corona is given by $G~\Tilde{\star}\overset{n}{\underset{i=1}{ \Lambda}} H_i$, and we have the following:
%
%
        \begin{lemma}\label{Lemma 6}
            If $u_i\in V(G),~i=1,2,\ldots,n$, then $u_i$ can be covered by any geodesic$(v_p^i,v_q^j),[v_p^i\in V(H_i) \text{ and } v_q^j\in V(H_j)]$, where $u_j\in N(u_i)=\{u_p: u_i \text{ adjacent to } u_p\}$.
        \end{lemma}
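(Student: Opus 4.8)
The plan is to adapt the argument of Lemma~\ref{Lemma 1} to the adjacency pattern of the neighborhood corona. Recall that in $G~\Tilde{\star}\overset{n}{\underset{i=1}{ \Lambda}} H_i$ each vertex of the copy $H_i$ is joined exactly to the vertices of $N_G(u_i)$, and to no other vertex of $G$ (in particular, not to $u_i$ itself). So fix $u_j\in N(u_i)$ and take arbitrary $v_p^i\in V(H_i)$ and $v_q^j\in V(H_j)$. Since $u_j\in N(u_i)$ we get $v_p^i\sim u_j$; since $G$-adjacency is symmetric, $u_i\in N(u_j)$, hence $v_q^j\sim u_i$; and $u_i\sim u_j$ by hypothesis. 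Thus $v_p^i-u_j-u_i-v_q^j$ is a path on four distinct vertices, i.e.\ a $v_p^i$--$v_q^j$ walk of length $3$ through $u_i$.

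It remains to check that this path is a geodesic, i.e.\ that $d(v_p^i,v_q^j)=3$ in the product. A length-$1$ path is impossible since distinct copies $H_i,H_j$ induce no edge; a length-$2$ path $v_p^i-w-v_q^j$ would need $w$ adjacent to both endpoints, and since the neighbours of $v_p^i$ outside $H_i$ all lie in $N_G(u_i)$, the neighbours of $v_q^j$ outside $H_j$ all lie in $N_G(u_j)$, and no vertex of $H_i$ is adjacent to a vertex of $H_j$, such a $w$ would have to be a common neighbour of $u_i$ and $u_j$ in $G$. If no such $w$ exists --- e.g.\ whenever the edge $u_iu_j$ lies on no triangle of $G$, in particular whenever $G$ is triangle-free --- then $d(v_p^i,v_q^j)=3$, the exhibited path is a geodesic, and it covers $u_i$; hence among the geodesics joining $v_p^i$ and $v_q^j$ one can fix one through $u_i$, which is the assertion.

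The delicate point --- and the step I expect to be the main obstacle --- is exactly this length-$2$ case: if $u_i$ and $u_j$ do share a neighbour $w$ in $G$, then $d(v_p^i,v_q^j)=2$, every geodesic between them has the shape $v_p^i-w'-v_q^j$ with $w'\in N_G(u_i)\cap N_G(u_j)$, and none of these covers $u_i$ because $v_p^i\not\sim u_i$. To make the statement hold in general I would read ``$u_i$ can be covered'' as: one can choose $u_j\in N(u_i)$ and a geodesic $(v_p^i,v_q^j)$ covering $u_i$, which is possible precisely when $u_i$ has a neighbour $u_j$ with $N_G(u_i)\cap N_G(u_j)=\varnothing$; under this reading (automatic when $G$ is triangle-free) the proof reduces to the one-line adjacency computation above together with the short distance check. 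I would state this hypothesis explicitly before using the lemma in the strong geodetic computation for $G~\Tilde{\star}\overset{n}{\underset{i=1}{ \Lambda}} H_i$.
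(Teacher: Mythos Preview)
Your adjacency computation --- $v_p^i\sim u_j$, $v_q^j\sim u_i$, $u_i\sim u_j$, giving the length-$3$ walk $v_p^i\,u_j\,u_i\,v_q^j$ --- is exactly what the paper's own proof consists of; the paper stops at that line and does not perform your distance check.

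Your further analysis is therefore not a gap in your argument but a genuine issue the paper overlooks. When the edge $u_iu_j$ lies on a triangle of $G$ (i.e.\ $N_G(u_i)\cap N_G(u_j)\neq\varnothing$), you are right that $d(v_p^i,v_q^j)=2$ in the product and that no geodesic between them passes through $u_i$, since $v_p^i\not\sim u_i$ in the neighborhood corona. So the lemma as stated does fail in that case; the paper's proof tacitly assumes the length-$3$ path is shortest, which is only guaranteed under the triangle-free hypothesis you isolate. Your proposed reading --- require that $u_i$ have a neighbour $u_j$ with $N_G(u_i)\cap N_G(u_j)=\varnothing$, automatic when $G$ is triangle-free --- is the right way to make the statement and its use in Theorem~\ref{Theorem 3} rigorous, and the paper does not make this explicit.
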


        \begin{proof}
            Each $v_p^i\in V(H_i)$ is adjacent to $u_j\in N(u_i)$, and each $v_q^j\in V(H_j)$ is adjacent to $u_i$. So the $geodesic(v_p^i,v_q^j)$ covers $u_i$, for all $i=1,2,\ldots,n$.
        \end{proof}
%
%
        \begin{lemma} \label{Lemma 7}
            For a fixed strong 2-geodetic cover $\eta'_{Sg}(H_k)$ of a graph $H_k;~k=1,2,\ldots,n$. Any vertex $v_p^k\in \eta'_{Sg}(H_k)$ cannot be covered by a vertex in $\eta'_{Sg}(H_k)$ and a vertex in $V(G~\Tilde{\star}\overset{n}{\underset{i=1}{ \Lambda}} H_i)\smallsetminus V(H_k)$.
        \end{lemma}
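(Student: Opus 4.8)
The plan is to mirror the proof of Lemma \ref{Lemma 3}, replacing the role of the ``private'' vertex $u_k$ (which in the ordinary generalized corona is adjacent to all of $V(H_k)$) by a suitable neighbour of $u_k$ in $G$. The point is that in $G~\Tilde{\star}\overset{n}{\underset{i=1}{ \Lambda}} H_i$ a vertex of $H_k$ is \emph{not} joined to $u_k$ itself but to every vertex of $N_G(u_k)$; since $G$ is connected with $|V(G)|\ge 2$, the set $N_G(u_k)$ is non-empty, so we may fix some $u_j\in N_G(u_k)$, and then by the definition of the generalized neighborhood corona $u_j$ is adjacent to every vertex of $V(H_k)$. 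This adjacency ``$u_j$ to all of $V(H_k)$'' is what we use in place of ``$u_k$ to all of $V(H_k)$''.

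First I would set up a contradiction: suppose $v_p^k\in\eta'_{Sg}(H_k)$ lies on a geodesic of $G~\Tilde{\star}\overset{n}{\underset{i=1}{ \Lambda}} H_i$ with one endpoint a vertex $w\in\eta'_{Sg}(H_k)$, $w\ne v_p^k$, and the other endpoint a vertex $z\in V(G~\Tilde{\star}\overset{n}{\underset{i=1}{ \Lambda}} H_i)\smallsetminus V(H_k)$. Then $d(w,z)=d(w,v_p^k)+d(v_p^k,z)$ with $d(w,v_p^k)\ge 1$, so it suffices to prove the distance identity $d(w,z)=d(v_p^k,z)$ for every such $z$; this forces $d(w,v_p^k)=0$, a contradiction.

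Next I would prove that identity by exploiting the fact that $H_k$ is attached to the rest of the graph \emph{only} through $N_G(u_k)$, and that every vertex of $H_k$ is at distance exactly $1$ from every vertex of $N_G(u_k)$. Concretely, any shortest $w$--$z$ path must leave $V(H_k)$, and (because each vertex of $N_G(u_k)$ is a common neighbour of all of $V(H_k)$, and any two vertices of $N_G(u_k)$ are joined through $u_k$ by a path of length $\le 2$ avoiding $V(H_k)$) it may be taken to leave $V(H_k)$ in one step through some $u_\ell\in N_G(u_k)$ and never return; hence $d(w,z)=1+\min_{u_\ell\in N_G(u_k)} d^{*}(u_\ell,z)$, an expression depending only on $u_k$ and $z$. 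Running the same argument from $v_p^k$ gives the same value, so $d(w,z)=d(v_p^k,z)$. Splitting into the cases $z\in V(G)$ and $z\in V(H_m)$ with $m\ne k$ (in the latter, the tail of the path also enters $H_m$ through $N_G(u_m)$) makes this fully explicit, and combining with the previous paragraph completes the argument.

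The main obstacle I anticipate is the bookkeeping in the distance identity, namely verifying that a shortest path from an $H_k$-vertex to an outside vertex gains nothing by taking extra steps inside $H_k$ before leaving, and nothing by re-entering $V(H_k)$ later. Both are ruled out precisely because all vertices of $H_k$ sit at distance $1$ from every vertex of the non-empty attaching set $N_G(u_k)$, and any ``$H_k$-shortcut'' between two vertices of $N_G(u_k)$ is no shorter than the length-$2$ detour through $u_k$. Once this is pinned down the contradiction is immediate, and the remainder of the proof is routine.
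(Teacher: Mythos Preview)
Your proposal is correct and follows essentially the same approach as the paper, which simply says ``the proof is similar to Lemma~\ref{Lemma 3}'': you correctly identify that the adaptation consists of replacing the gateway vertex $u_k$ (adjacent to all of $V(H_k)$ in the ordinary corona) by any $u_j\in N_G(u_k)$ (adjacent to all of $V(H_k)$ in the neighbourhood corona), and you then run the same shortcut contradiction. Your write-up is in fact more careful than the paper's own argument for Lemma~\ref{Lemma 3}, since you make explicit the distance identity $d(w,z)=d(v_p^k,z)$ and justify why a geodesic leaving $H_k$ need not re-enter it; none of this extra care introduces any error.
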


        \begin{proof}
            The proof is similar to Lemma \ref{Lemma 3}.
        \end{proof}
%
%
        \begin{theorem}\label{Theorem 3}
            Let $G,H_1,H_2,\ldots,H_n$, where $|V(G)|=n$ be $N+1$ graphs. If the strong 2-geodetic basis of $H_i$ is $\eta'_{Sg}(H_i)=\{b_1^i,b_2^i,\ldots,b_{s_i}^i\},~i=1,2,\ldots,n$, then the strong geodetic basis of the generalized neighborhood corona product $\eta_{Sg}(G~\Tilde{\star}\overset{n}{\underset{i=1}{ \Lambda}} H_i)=\{b_1^1,b_2^1,\ldots,b_{s_1}^1,b_1^2,b_2^2,\ldots,b_{s_n-1}^n,$\\$b_{s_n}^n\}$ and the strong geodetic number $Sg(G~\Tilde{\star}\overset{n}{\underset{i=1}{ \Lambda}} H_i)=\overset{n}{\underset{i=1}{\sum}}s_i$.
        \end{theorem}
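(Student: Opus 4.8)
The plan is to mirror the proof of Theorem~\ref{Theorem 1} almost verbatim, replacing the role of Lemma~\ref{Lemma 1} by Lemma~\ref{Lemma 6} and the role of Lemma~\ref{Lemma 3} by Lemma~\ref{Lemma 7}. First I would set up the notation: $G$ of order $n$ with $V(G)=\{u_1,\ldots,u_n\}$, graphs $H_1,\ldots,H_n$ of orders $t_1,\ldots,t_n$ with $V(H_i)=\{v^i_1,\ldots,v^i_{t_i}\}$, and fixed strong $2$-geodetic bases $\eta'_{Sg}(H_i)=\{b^i_1,\ldots,b^i_{s_i}\}$. Put $S=\bigcup_{i=1}^n \eta'_{Sg}(H_i)$, which as a set is exactly the claimed basis since the copies of the $H_i$ are vertex-disjoint in $G~\Tilde{\star}\overset{n}{\underset{i=1}{\Lambda}} H_i$.

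Next I would argue that $S$ is a strong geodetic set. The vertex set partitions into $V(G)$ and the $V(H_i)$. For the vertices inside a given $H_i$: the fixed $2$-geodesics witnessing $\eta'_{Sg}(H_i)$ still have length $\le 2$ and hence remain geodesics in the product (a shorter path would have to route through some $u_j\in N(u_i)$, but such a path has length $\ge 2$ as well, so lengths $1$ and $2$ are preserved), so every $v^i_q\in V(H_i)\smallsetminus\eta'_{Sg}(H_i)$ is covered by a fixed geodesic between two elements of $\eta'_{Sg}(H_i)\subseteq S$. For the vertices of $G$: since $G$ is connected and $|V(G)|=n\ge 2$, each $u_i$ has a neighbour $u_j$; then by Lemma~\ref{Lemma 6} the geodesic $(v^i_p,v^j_q)$ with $v^i_p\in\eta'_{Sg}(H_i)$, $v^j_q\in\eta'_{Sg}(H_j)$ passes through $u_i$, and I fix one such geodesic per vertex of $G$. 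I should check these assignments are consistent — each pair from $S$ gets at most one assigned geodesic — which holds because within-$H_i$ pairs are used for $H_i$-vertices and cross pairs $(v^i_p,v^j_q)$ with $u_iu_j\in E(G)$ can be allocated to the $u_i$'s (there are $n$ vertices of $G$ and, since $G$ is connected, enough independent cross-pairs to realize this).

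Then I would prove minimality, i.e.\ that no proper subset of $S$ is a strong geodetic set, hence $S$ is a basis and $Sg(G~\Tilde{\star}\overset{n}{\underset{i=1}{\Lambda}} H_i)=|S|=\sum_{i=1}^n s_i$. Removing a vertex $b^k_r$ from $S$: by Lemma~\ref{Lemma 7}, no vertex of $\eta'_{Sg}(H_k)$ can be covered using one vertex of $\eta'_{Sg}(H_k)$ together with a vertex outside $V(H_k)$; and since $\eta'_{Sg}(H_k)$ is a \emph{minimum} strong $2$-geodetic cover of $H_k$, the remaining $s_k-1$ vertices of $\eta'_{Sg}(H_k)$ cannot cover all of $V(H_k)$ via $2$-geodesics internal to $H_k$. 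Geodesics in the product joining two such leftover vertices have the same length as in $H_k$ when that length is $\le 2$, and are useless (too long, or pass through $u_k$) otherwise, so $b^k_r$ (or some vertex of $H_k$) is left uncovered. Finally, since the bases $\eta'_{Sg}(H_i)$ are pairwise disjoint, $|S|=\sum_{i=1}^n s_i$, giving the stated strong geodetic number.

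The main obstacle I anticipate is the bookkeeping in the uniqueness-of-assignment step: one must be sure that a single family of fixed geodesics can simultaneously (i) cover every $v^i_q\notin\eta'_{Sg}(H_i)$ using an in-$H_i$ pair, and (ii) cover every $u_i\in V(G)$ using a cross pair $(v^i_p,v^j_q)$ with $u_iu_j\in E(G)$, without ever demanding two different geodesics for the same unordered pair of basis vertices. This is where connectivity of $G$ and the fact that $\lvert V(G)\rvert=n$ while there are $\binom{n}{2}$ or more available cross-pairs really gets used; I would phrase it as: orient a spanning connected structure of $G$ so that each $u_i$ is assigned a distinct incident edge $u_iu_{\sigma(i)}$, then use the pair $(b^i_1,b^{\sigma(i)}_1)$ for $u_i$ — distinct unordered pairs because the assigned edges are distinct — exactly as in Theorem~\ref{Theorem 1}. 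Everything else is a routine transcription of the earlier corona argument with ``adjacent to $u_i$'' replaced by ``adjacent to every neighbour of $u_i$.''
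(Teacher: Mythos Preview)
Your proposal is correct and follows essentially the same route as the paper: set up notation, invoke Lemma~\ref{Lemma 6} to cover the vertices of $G$, invoke Lemma~\ref{Lemma 7} to argue each $\eta'_{Sg}(H_i)$ is forced, conclude that $\bigcup_i \eta'_{Sg}(H_i)$ is a strong geodetic basis, and sum the disjoint cardinalities. You are in fact more careful than the paper about the uniqueness-of-assignment bookkeeping (the paper glosses over this entirely), though your suggested fix of assigning each $u_i$ a \emph{distinct} incident edge cannot work verbatim when $G$ is a tree; you would instead vary the chosen basis vertices along a shared edge.
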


        \begin{proof}
            Let $G$ be a graph of order $n$ and $H_1,H_2,\ldots,H_n$ be graphs(not necessarily isomorphic) of order $t_1,t_2,\ldots,t_n$. Let $V(H_i)=\{v^i_1,v^i_2,\ldots,v^i_{t_i}\}$ and $\eta'_{Sg}(H_i)=\{b_1^i,b_2^i,\ldots,b_{s_1}^i\}$ be the fixed 2-geodetic basis of $H_i,~i=1,2,\ldots,n$.
            Now by Lemma \ref{Lemma 6} each vertex of $G$ is covered by the vertices in $\overset{n}{\underset{i=1}{\cup}}\eta'_{Sg}(H_i)$. Also by Lemma \ref{Lemma 7} we have each $\eta'_{Sg}(H_i),~i=1,2,\ldots,n$, remains unchanged. So, $\overset{n}{\underset{i=1}{\cup}}\eta'_{Sg}(H_i)=\{b_1^1,b_2^1,\ldots,b_{s_1}^1,b_1^2,b_2^2,\ldots,b_{s_n-1}^n,b_{s_n}^n\}$ covers vertices in $G~\Tilde{\star}\overset{n}{\underset{i=1}{ \Lambda}} H_i$. Now, if we remove any vertex from $\overset{n}{\underset{i=1}{\cup}}\eta'_{Sg}(H_i)$, it will not cover $G~\Tilde{\star}\overset{n}{\underset{i=1}{ \Lambda}} H_i$ as each $\eta'_{Sg}(H_i)$ is a 2-geodetic basis of $H_i$ and by Lemme \ref{Lemma 7} vertices in $H_i$ cannot be covered by a vertex in $\eta'_{Sg}(H_i)$ and a vertex in $V(G~\Tilde{\star}\overset{n}{\underset{i=1}{ \Lambda}} H_i)\smallsetminus V(H_k),$ for all $i=1,2,\ldots,n$. Hence, $\overset{n}{\underset{i=1}{\cup}}\eta'_{Sg}(H_i)$ is the geodetic basis of $G~\Tilde{\star}\overset{n}{\underset{i=1}{ \Lambda}} H_i$.\\ Next, as each $\eta'_{Sg}(H_i)$ are disjoint we have the strong geodetic number, $$Sg(G~\Tilde{\star}\overset{n}{\underset{i=1}{ \Lambda}} H_i)=\overset{n}{\underset{i=1}{\sum}}s_i.$$
        \end{proof}
%
%
    \begin{corollary} \label{Corollary 3}
        If each $H_i,~i=1,2,\ldots,n$ are isomorphic, denoted by $H$ of order $m$. The generalized neighborhood corona product reduces to the neighborhood corona product, and the strong geodetic set is given by $\eta(G\circ H)=\{b^1_1,b^1_2,\ldots,b^1_m,b^2_1,\ldots,b^n_{s-1},b^n_s\}$. And the strong geodetic number $Sg(G\star H)=n\cdot Sg'(H)$.
    \end{corollary}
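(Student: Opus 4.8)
\textbf{Proof proposal for Corollary \ref{Corollary 3}.}
The plan is to obtain this as an immediate specialization of Theorem \ref{Theorem 3}. First I would observe that when $H_1 = H_2 = \cdots = H_n = H$, the generalized neighborhood corona $G~\Tilde{\star}\overset{n}{\underset{i=1}{ \Lambda}} H_i$ coincides, by Definition \ref{setiawan2021bounds} (the generalized neighborhood corona definition quoted in the preliminaries), with the ordinary neighborhood corona $G \star H$; this is just unpacking the two definitions and matching the joining rule. Next I would note that the strong 2-geodetic basis of each copy $H_i$ has the same cardinality $s = Sg'(H)$, since the copies are isomorphic, so writing $\eta'_{Sg}(H_i) = \{b_1^i, b_2^i, \ldots, b_s^i\}$ is legitimate.

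Then I would apply Theorem \ref{Theorem 3} verbatim: the strong geodetic basis of the product is $\overset{n}{\underset{i=1}{\cup}}\eta'_{Sg}(H_i) = \{b_1^1, b_2^1, \ldots, b_s^1, b_1^2, \ldots, b_{s-1}^n, b_s^n\}$, and the strong geodetic number is $\overset{n}{\underset{i=1}{\sum}} s_i$. Since every $s_i$ equals $s = Sg'(H)$, the sum collapses to $n \cdot Sg'(H)$, which is the claimed value. The union being disjoint (needed to conclude the cardinality is exactly $ns$ rather than something smaller) is inherited directly from the disjointness already established in the proof of Theorem \ref{Theorem 3}, because the vertex sets $V(H_i)$ are pairwise disjoint in the corona construction.

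I do not anticipate a genuine obstacle here, since this is a direct corollary; the only point requiring a line of care is the identification of the generalized neighborhood corona with the neighborhood corona in the isomorphic case, and verifying that the indexing $\{b_1^1, \ldots, b_s^n\}$ in the statement is consistent with the $n$ disjoint bases of size $s$. (One should also silently correct the typo in the displayed set, where the first block is written with superscript $m$ instead of $s$, so that it reads $b_1^1, b_2^1, \ldots, b_s^1$.) Everything else is a one-line appeal to Theorem \ref{Theorem 3}.
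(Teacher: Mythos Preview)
Your proposal is correct and matches the paper's approach exactly: the paper's own proof is the single line ``The proof follows directly from Theorem \ref{Theorem 3},'' and your specialization argument (identical $H_i$ forces $s_i = Sg'(H)$ for all $i$, so $\sum s_i = n\cdot Sg'(H)$) is precisely what that line means when unpacked. Your remark about the typo $b^1_m$ versus $b^1_s$ is also apt.
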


    \begin{proof}
        The proof follows directly from Theorem \ref{Theorem 3}.
    \end{proof}

%
%
    \begin{prop}\label{Proposition 3}
        $v_p^i,~1\leq p\leq t_i$ and $v_q^j,~1\leq q\leq t_j$ are antipodal in $G~\Tilde{\star}\overset{n}{\underset{i=1}{ \Lambda}} H_i$ if $u_i$ and $u_j$ are antipodal in $G$ and has no common neighbor. If $u_i$ and $u_j$ are antipodal in $G$ and has a common neighbor say $u_k$, then the vertices $v_p^i,~1\leq p\leq t_i$ and $v_r^k,~1\leq r\leq t_k$, and also the vertices $v_q^j,~1\leq q\leq t_j$ and $v_r^k,~1\leq r\leq t_k$ are antipodal.
    \end{prop}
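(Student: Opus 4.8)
The plan is to reduce the statement to a computation of $diam\!\big(G~\Tilde{\star}\overset{n}{\underset{i=1}{\Lambda}} H_i\big)$ together with a handful of explicit distances inside the product graph, and then to check in each of the two cases that the exhibited pair realises that diameter. For the distances I would use the projection $\pi$ that fixes every vertex of $G$ and collapses each $v^i_p$ to $u_i$: since a vertex of $H_i$ is adjacent precisely to $N(u_i)$ (the neighbourhood of $u_i$ in $G$), $\pi$ sends every edge of the product either to an edge of $G$ or to a single vertex, hence is distance non-increasing and is the identity on $V(G)$. This already gives $d(u_a,u_b)=d_G(u_a,u_b)$, and by routing $v^i_p$ first onto a neighbour of $u_i$ lying on a shortest $u_i$--$u_j$ path one gets $d(v^i_p,u_j)=d_G(u_i,u_j)$ for $i\ne j$, while $d(v^i_p,u_i)=2$ and $d(v^i_p,v^i_q)\le 2$ (recall $G$ is connected with at least two vertices). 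For $i\ne j$ I would show that a shortest $v^i_p$--$v^j_q$ path normalises to $v^i_p,w_1,\dots,w_{\ell-1},v^j_q$ with $w_1\in N(u_i)$, $w_{\ell-1}\in N(u_j)$ and $w_1\cdots w_{\ell-1}$ a path of $G$, whence
\[
d(v^i_p,v^j_q)=2+\min\bigl\{\,d_G(w,w')\;:\;w\in N(u_i),\ w'\in N(u_j)\,\bigr\},
\]
a quantity that is $0$ exactly when $u_i,u_j$ have a common neighbour, and that equals $d_G(u_i,u_j)-2$ as soon as $d_G(u_i,u_j)\ge 2$. Assembling all of this yields $diam\!\big(G~\Tilde{\star}\overset{n}{\underset{i=1}{\Lambda}} H_i\big)=diam(G)$ whenever $diam(G)\ge 3$, and $diam\!\big(G~\Tilde{\star}\overset{n}{\underset{i=1}{\Lambda}} H_i\big)\in\{2,3\}$ when $diam(G)\le 2$, the value $3$ occurring precisely when $G$ has an edge whose endpoints share no common neighbour.

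For the first assertion I would argue as follows. Suppose $u_i,u_j$ are antipodal in $G$ with $N(u_i)\cap N(u_j)=\emptyset$. A common neighbour would be the interior vertex of a geodesic of length $2$, so $d_G(u_i,u_j)\ne 2$. If $d_G(u_i,u_j)\ge 3$, then $diam(G)\ge 3$ coincides with the diameter of the product and the displayed formula gives $d(v^i_p,v^j_q)=d_G(u_i,u_j)=diam(G)$; if $d_G(u_i,u_j)=1$, then $diam(G)=1$, so $G$ is complete, but in a complete graph on three or more vertices every two vertices have a common neighbour, forcing $G=K_2$, where the product has diameter $3=d(v^1_p,v^2_q)$. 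In either case $v^i_p$ and $v^j_q$ attain the diameter and are therefore antipodal.

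For the second assertion, $u_k$ is adjacent to both $u_i$ and $u_j$, so $d_G(u_i,u_j)\le 2$, and the displayed formula gives $d(v^i_p,v^k_r)=2+\delta_{ik}$ and $d(v^j_q,v^k_r)=2+\delta_{jk}$ where $\delta_{ik},\delta_{jk}\in\{0,1\}$, with $\delta_{ik}=1$ (resp.\ $\delta_{jk}=1$) exactly when the edge $u_iu_k$ (resp.\ $u_ku_j$) lies in no triangle of $G$. Comparing with the diameter computed above: when every edge of $G$ lies in a triangle the product has diameter $2$ and both of these distances are $2$; when $G$ is triangle-free the product has diameter $3$ and both are $3$; in either case $v^i_p$ and $v^k_r$, and likewise $v^j_q$ and $v^k_r$, realise the diameter and are antipodal.

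The main obstacle, and the point that must be handled with care, is the second assertion: it goes through cleanly only when an extremal pair for the product's diameter lies ``over'' one of the edges $u_iu_k$ or $u_ku_j$. For a graph having some edge outside every triangle while simultaneously having $u_iu_k$ (or $u_ku_j$) inside a triangle, one obtains $d(v^i_p,v^k_r)=2$, strictly below the product diameter $3$, so the clean statement requires an extra hypothesis removing that clash --- for instance $G$ triangle-free, or every edge of $G$ contained in a triangle. Granting such a hypothesis, the remaining work is exactly the short case analysis above; the genuine content is the first step, namely pinning down $diam\!\big(G~\Tilde{\star}\overset{n}{\underset{i=1}{\Lambda}} H_i\big)$ and certifying which pairs realise it.
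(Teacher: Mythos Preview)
Your approach is sound and considerably more careful than the paper's own argument. The paper proceeds by direct path construction: in Case~I it replaces the endpoints $u_i,u_j$ of a geodesic $(u_i,a_1,\dots,a_k,u_j)$ in $G$ by $v^i_p,v^j_q$ to obtain a path of the same length, and in Case~II it computes $d(v^i_p,v^k_r)\in\{2,3\}$ according as $u_i$ and $u_k$ share a neighbour, then asserts antipodality ``in both scenarios'' without ever determining the diameter of the product. Your route---first computing $diam\big(G~\Tilde{\star}\overset{n}{\underset{i=1}{\Lambda}} H_i\big)$ via the contraction $\pi$ and the closed formula $d(v^i_p,v^j_q)=2+\min\{d_G(w,w'):w\in N(u_i),\,w'\in N(u_j)\}$, and only afterwards checking which pairs realise that diameter---is what a proof of antipodality actually requires, and it exposes precisely the difficulty you flag in the second assertion.

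That difficulty is genuine, not an artefact of your method. A small witness: let $G$ be the ``house'' graph, a $4$-cycle $u_1u_2u_3u_4$ together with a vertex $u_5$ adjacent to $u_1$ and $u_2$. Then $diam(G)=2$, the pair $(u_3,u_5)$ is antipodal with common neighbour $u_k=u_2$, the edge $u_5u_2$ lies in the triangle $u_1u_2u_5$, while the edge $u_3u_4$ lies in no triangle. Hence $d(v^5_q,v^2_r)=2$ but the product has diameter $3$, so $v^5_q$ and $v^2_r$ are \emph{not} antipodal, contrary to the second clause of the proposition. The paper's Case~II merely records that $d(v^i_p,v^k_r)$ is $2$ or $3$ and declares both outcomes antipodal, which cannot hold simultaneously in one graph. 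Your proposed repair---assume either that $G$ is triangle-free or that every edge of $G$ lies in a triangle---is exactly what is needed to salvage the second clause, and under either hypothesis your case analysis goes through.
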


    \begin{proof}
        Let $u_i$ and $u_j$ be antipodal vertices in $G$ and let $(u_i,a_1,a_2,\ldots,a_k,u_j)$ be the geodesic path, $l\in \mathbb{N}$.
        Also, each $v_p^i $ for $1\leq p\leq t_i$ is adjacent to all the neighborhoods of $u_i$, and each $v_q^j $ for $1\leq q\leq t_j$ is adjacent to all the neighborhoods of $u_j$.\\
        Case: I $u_i$ and $u_j$ do not have a common neighbor.\\
        In this case $(v_p^i,a_1,a_2,\ldots,a_k,v_q^j)$ is a geodesic path, so $d(v_p^i,v_q^j)=d(u_i,u_j)$ and hence $v_p^i$ and $v_q^j$ are antipodal for $1\leq p\leq t_i$ and $1\leq q\leq t_j$.\\
        Case: II $u_i$ and $u_j$ has a common neighborhood $u_k$(say).\\
        Then each $v_r^k$, for $1\leq r\leq t_k$ is adjacent to $u_i$ and $u_j$, and
        \begin{equation*}
            d(v_p^i,v_r^k)=\begin{cases}
                3 &\text{if } u_i \text{ and } u_k \text{ has no common neighbor}\\
                2 &\text{if } u_i \text{ and } u_k \text{ has a common neighbor.}
            \end{cases}
        \end{equation*}
        In both the scenarios $v_p^i$ and $v_r^k$ are antipodal for $1\leq p\leq t_i$ and $1\leq r\leq t_l$.\\
        Similarly, we have $v_q^j$ and $v_r^k$ are antipodal for $1\leq q\leq t_j$ and $1\leq r\leq t_l$.
    \end{proof}

\bibliographystyle{abbrv} 
\bibliography{main.bib}

\end{document}